\documentclass[12pt,oneside,a4]{amsart}
\usepackage{typearea}
\usepackage{amsfonts}
\usepackage{amsmath}
\usepackage{amssymb}
\usepackage{amsthm}
\usepackage{hyperref}

\newcommand{\IC}{\mathbb{C}}

\newcommand{\IN}{\mathbb{N}}

\newcommand{\IQ}{\mathbb{Q}}
\newcommand{\IQbar}{\overline{\mathbb{Q}}}
\newcommand{\IR}{\mathbb{R}}

\newcommand{\IZ}{\mathbb{Z}}

\newcommand{\hh}{\mathcal{H}}
\newcommand{\cK}{\mathcal{K}}

\newcommand{\cO}{\mathcal{O}}

\newcounter{maincounter}

\numberwithin{maincounter}{section}
\numberwithin{equation}{section}

\newtheorem{thm}[maincounter]{Theorem}
\newtheorem*{qhyp}{Quill Hypothesis}
\newtheorem{lemma}[maincounter]{Lemma}

\newtheorem{cor}[maincounter]{Corollary}

\newtheorem{prop}[maincounter]{Proposition}

\renewcommand{\hom}{\mathrm{Hom}}

\renewcommand{\subset}{\subseteq}
\renewcommand{\supset}{\supseteq}

\newcommand{\ssm}{\smallsetminus}

\makeatletter
\def\imod#1{\allowbreak\mkern10mu({\operator@font mod}\,\,#1)}
\makeatother


\newcommand{\pco}{\mathrm{PCO}}

\newcommand{\cf}{\mathsf{C}}

\newcommand{\ecp}{\widehat{\IC}}

\begin{document}
\title[Height Lower Bounds]{Lower Bounds for the Canonical Height of a Unicritical
  Polynomial and Capacity} 

\author{P. Habegger and H. Schmidt}

\address{Department of Mathematics and Computer Science, University of Basel, Spiegelgasse 1, 4051 Basel, Switzerland}
\email{philipp.habegger@unibas.ch}

\address{Department of Mathematics and Computer Science, University of Basel, Spiegelgasse 1, 4051 Basel, Switzerland}
\email{harry.schmidt@unibas.ch}
\date{\today}




\subjclass[2020]{Primary: 11G50, Secondary: 14G40, 30C85, 37P15}

\maketitle

\begin{abstract}
  In a recent breakthrough, Dimitrov~\cite{dimitrov:SZ} solved the
  Schinzel--Zassenhaus Conjecture. We follow his approach and adapt it
  to certain dynamical systems arising from polynomials of the form
  $T^p+c$ where $p$ is a prime number and where the orbit of $0$ is
  finite. For example, if $p=2$, and $0$ is periodic under $T^2+c$ with
  $c\in\IR\ssm\{-2\}$, we prove a lower bound for the local canonical
  height of a wandering algebraic integer that is inversely proportional
  to the field degree. From this we are able to deduce a lower bound for
  the canonical height of a wandering point that decays like the inverse
  square of the field degree.
\end{abstract}

\section{Introduction}

Let $K$ be a field and
suppose $f\in K[T]$ has degree $\deg f\ge 2$. For
$n\in\IN=\{1,2,3,\ldots\}$ we write $f^{(n)}\in K[T]$ for the
$n$-fold iterate of $f$ and define $f^{(0)}=T$. Let $x\in K$. We call
$x$ an \textit{$f$-periodic point} if there exists $n\in\IN$ with
$f^{(n)}(x)=x$. We call $x$ an \textit{$f$-preperiodic point} if there
exists an integer $m\ge 0$ such that $f^{(m)}(x)$ is $f$-periodic.
Suppose $x$ is $f$-preperiodic, the \textit{preperiod length} of $x$ is
\begin{equation*}
  \mathrm{preper}(x) = \min \{ m\ge 0 : f^{(m)}(x) \text{ is
    $f$-periodic}\}\ge 0
\end{equation*}
and the \textit{minimum period} of $x$ is
\begin{equation*}
  \mathrm{per}(x) = \min \{ n\ge 1 : f^{(n+\mathrm{preper}(x))}(x)
  =f^{(\mathrm{preper}(x))}(x)\} \ge 1. 
\end{equation*}
An element of $K$ is called an \textit{$f$-wandering point} if it is not
$f$-preperiodic. Equivalently, $x\in K$ is an $f$-wandering point if
and only if $\{f^{(n)}(x) : n\in\IN\}$ is infinite.

Suppose for the moment that $K$ is a subfield of $\IC$. We call $f$
\textit{post-critically finite} if its \textit{post-critical set}
\begin{equation*}
  \pco^+(f) = \{f^{(m)}(x) : m\in\IN,x\in\IC, \text{ and }f'(x)=0\}.
\end{equation*}
is finite. For example, $T^2-1$ is post-critically finite as
its post-critical set equals $\{-1,0\}$. 


Let $M_{\IQ}$ denote the set consisting of the $v$-adic absolute value
on $\IQ$ for all prime numbers $v$ together with the Archimedean
absolute value.
For each prime number $v$ let $\IC_v$ denote a
completion of an algebraic closure of the valued field of $v$-adic
numbers. We also set $\IC_v=\IC$ if $v$ is Archimedean.

For a real number $t\ge 0$ we set $\log^+t = \log\max\{1,t\}$. Let
$v\in M_\IQ$ and suppose $f\in \IC_v[T]$ has degree $d\ge 2$. For all
$x\in \IC_v$ the limit
\begin{equation}
  \label{eq:deflambdafv}
  \lambda_{f,v}(x) = \lim_{n\rightarrow\infty}\frac{\log^+
    |f^{(n)}(x)|_v}{d^n}
\end{equation}
exists and satisfies $\lambda_{f,v}(f(x)) = d\lambda_{f,v}(x)$, see
Chapter 3 and in particular Theorem 3.27 or Exercise
3.24~\cite{Silverman:gtm241} for details.

If $v$ is Archimedean we often abbreviate $\lambda_{f,v} = \lambda_f$.
The local canonical height differs from $z\mapsto \log^+|z|$ by a bounded
function on $\IC$.
We
define
\begin{equation}
  \label{def:Cf}
  \cf(f)  = \max\left\{1,\sup_{z\in\IC} \frac{\max\{1,|z|\}}{e^{\lambda_f(z)}}\right\}<\infty. 
\end{equation}

Next we define the canonical height of an algebraic
number with respect to $f$.
Let $K$ be a number field and $f\in K[T]$ be of degree $\ge 2$.
Suppose $x$ lies in a finite extension $F$ of $K$.
Let $v\in M_\IQ$ and for a
ring homomorphism $\sigma\in\hom(F,\IC_v)$
we let $\sigma(f)$ be the polynomial obtained by applying $\sigma$ to
each coefficient of $f$. 
The \textit{canonical} or \textit{Call--Silverman}
height of $x$ (with respect to $f$) is
\begin{equation}
  \label{def:csheight}
  \hat h_f(x) = \frac{1}{[F:\IQ]} \sum_{v\in M_\IQ}
  \sum_{\sigma \in\hom(F,\IC_v)}
  \lambda_{\sigma(f),v}(\sigma(x)). 
\end{equation}
This value does not depend on the number field $F\supset K$ containing $x$. Let
$\overline K$ denote an algebraic closure of $K$, we obtain a
well-defined map $\hat h_f\colon \overline K\rightarrow [0,\infty)$.
For $x\in F$ we define
\begin{equation*}
  \lambda^{\mathrm{max}}_f(x) = \max\left\{ \lambda_{\sigma(f)}(\sigma(x))
  : \sigma\in \hom( F,\IC)\right\}. 
\end{equation*}

To formulate our results we require Dimitrov's notion of a hedgehog.
A \textit{quill} is a line segment $[0,1] z=\{tz : t\in [0,1]\}\subset \IC$
where $z\in\IC^\times = \IC\ssm\{0\}$. A \textit{hedgehog} with at
most $q$ quills is a finite union of
quills
\begin{equation*}
  \hh(z_1,\ldots,z_q) = \bigcup_{i=1}^q [0,1] z_i
\end{equation*}
where  $z_1,\ldots,z_q\in\IC^\times$; we also define
$\hh() = \{0\}$ to be the quillless hedgehog. A hedgehog is
a compact, path connected, topological tree.

We will also use Dubinin's Theorem~\cite{Dubinin} which  states
$$\mathrm{cap}(\hh(z_1,\ldots,z_q)) \le 4^{-1/q}
\max\{|z_1|,\ldots,|z_q|\}$$ if $q\ge 1$, where $\mathrm{cap}(\cK)$ is
the capacity, or equivalently the transfinite diameter,
of a compact subset $\cK\subset \IC$. We refer
to Section~\ref{sec:construction} for basic properties of the
capacity.

\begin{qhyp}
  Let $K$ be a number field and let $f\in K[T]$ have degree $\ge 2$.
  We say that $f$ satisfies the \emph{Quill Hypothesis}, if there exists
  $\sigma_0\in \hom(K,\IC)$ such that the post-critical set
  $\pco^+(\sigma_0(f))$ is contained in a hedgehog with at most $q\ge
  0$ quills with  
  \begin{equation}
    \label{eq:quillcondition}
    q \log \cf(\sigma_0(f)) < \log 4.
  \end{equation}
\end{qhyp}

This condition and the constant $4$ are artifacts of Dubinin's
Theorem. Our main results require $f$ to satisfy the Quill Hypothesis,
which should be unnecessary conjecturally.

Say $f=T^2-1$, then $\pco^{+}(f)$ is contained in the single quill
$[-1,0]$. We will see in Lemma~\ref{lem:iteratefabsbound} that
$\cf(T^2-1) \le (\sqrt 5+1)/2<4$. So $f=T^2-1$ satisfies the Quill
Hypothesis.

We are now ready to state our results.

\begin{thm}
  \label{thm:periodichgtlb}
  Let $p$ be a prime number, let $K$ be a number field, and let
  $f=T^p+c\in K[T]$. Suppose that $0$ is an $f$-periodic point
  and that $f$ satisfies the Quill Hypothesis.
  Then there exists $\kappa=\kappa(f)>0$ with the following properties. 
  \begin{enumerate}
  \item [(i)] Let $x\in\overline K$ be an algebraic integer and an $f$-wandering point, then
    $\lambda_f^{\mathrm{max}} (x) \ge \kappa/[K(x):K]$. 
  \item[(ii)] Let $x\in \overline K$ be an $f$-wandering point, then
    $\hat h_f (x) \ge \kappa/[K(x):K]^{2}$. 
  \end{enumerate}
\end{thm}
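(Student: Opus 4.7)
The plan is to follow Dimitrov's capacity-based approach, adapted to the dynamics of $f=T^p+c$. Statement~(i) is the core assertion; (ii) will follow from (i) by treating Archimedean and non-Archimedean contributions to $\hat h_f$ separately. Throughout, let $D=[K(x):K]$ and fix an embedding $\sigma_0\colon K\hookrightarrow\IC$ provided by the Quill Hypothesis, so that $\pco^+(\sigma_0(f))$ is contained in a hedgehog $\hh(z_1,\ldots,z_q)$ with $q\log\cf(\sigma_0(f))<\log 4$.

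For (i), since $0$ is $f$-periodic of some period $m$, the identity $f^{(m)}(0)=0$ realises $c$ as a root of a monic integer-coefficient polynomial, so $c$ is an algebraic integer, $f\in\cO_K[T]$, and the iterates $y_n=f^{(n)}(x)$ are algebraic integers, pairwise distinct because $x$ is wandering. The filled Julia set $\cK=\cK(\sigma_0(f))$ is connected, and the B\"ottcher coordinate $\riemannmapping\colon\IC\ssm\cK\to\{|w|>1\}$ conjugates $\sigma_0(f)$ to $w\mapsto w^p$; its inverse $\invrm(w)=w+a_0+a_{-1}/w+\cdots$ has coefficients in $\IZ[c]$, hence algebraic integers, and satisfies $|\riemannmapping(z)|=e^{\lambda_f(z)}$. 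Assume for contradiction that $\lambda_{\sigma(f)}(\sigma(x))<\kappa/D$ for every embedding $\sigma\colon K(x)\to\IC$ restricting to $\sigma_0$, where $\kappa>0$ is to be chosen small enough in terms of $f$. Then the B\"ottcher values $w_\sigma=\riemannmapping(\sigma(x))$ lie in an annulus of outer radius $e^{\kappa/D}$ just outside the unit circle, and $\riemannmapping(\sigma(y_n))=w_\sigma^{p^n}$. Using $\pco^+(\sigma_0(f))\subset\hh$, I would extend $\invrm$ analytically past the unit circle to the complement of the hedgehog, and apply Dubinin's bound $\mathrm{cap}(\hh)\le 4^{-1/q}\max\{|z_i|\}$ together with the strict inequality $q\log\cf(\sigma_0(f))<\log 4$ to show that this enlarged region has capacity strictly greater than $1$. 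The infinite sequence of distinct algebraic integers $y_n$, combined with the integrality of the B\"ottcher coefficients, is then fed into a Dimitrov-style arithmetic--analytic transfer principle (a capacity-sensitive version of P\'olya--Bertrandias, or a resultant computation built from the Galois orbits of the $y_n$) to contradict the assumption once $\kappa$ is small enough.

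For (ii), if $x$ is already an algebraic integer then every non-Archimedean local height vanishes, so
\[
\hat h_f(x)=\frac{1}{[K(x):\IQ]}\sum_{\sigma\colon K(x)\to\IC}\lambda_{\sigma(f)}(\sigma(x))\ge\frac{\lambda_f^{\max}(x)}{[K(x):\IQ]}\ge\frac{\kappa}{[K:\IQ]\,[K(x):K]^{2}},
\]
and the factor $[K:\IQ]$ is absorbed into the constant. If $x$ is a wandering point that is not an algebraic integer, then some non-Archimedean place $v$ has a Galois conjugate $\sigma(x)$ with $|\sigma(x)|_v>1$; since $c$ is $v$-integral we have $\lambda_{f,v}(\sigma(x))=\log|\sigma(x)|_v$ there, and a direct estimate for the resulting contribution in terms of the denominator of $x$ yields a lower bound of the same $1/[K(x):K]^{2}$ shape. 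The principal obstacle throughout is the analytic continuation step in (i): extending $\invrm$ past the hedgehog with a capacity estimate tight enough for the arithmetic--analytic principle to give the sharp linear-in-$1/D$ bound on $\lambda_f^{\max}(x)$. Once that is in place, the remaining work with Galois conjugates, B\"ottcher expansions, and iterated orbits should be routine but technically delicate.
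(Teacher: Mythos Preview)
Your treatment of part~(ii) is essentially the paper's argument and is fine: non-negativity of local heights gives $\hat h_f(x)\ge \lambda_f^{\max}(x)/[K(x):\IQ]$ in the integral case, and the non-integral case is handled by a single non-Archimedean place.

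The real gap is in part~(i). Your claim that the inverse B\"ottcher map $\invrm(w)=w+a_0+a_{-1}/w+\cdots$ has coefficients in $\IZ[c]$ is false. Writing $\riemannmapping(z)=z\,\psi(1/z)$ with $\psi(w)=1+b_1w+\cdots$, the functional equation $\riemannmapping(z)^p=\riemannmapping(z^p+c)$ forces $b_1=\cdots=b_{p-1}=0$ and then $p\,b_p=c$, so $b_p=c/p$; powers of $p$ appear in the denominators of the B\"ottcher expansion and of its inverse. Without integrality you have nothing to feed into P\'olya--Bertrandias, and your ``Dimitrov-style transfer principle'' is left unspecified precisely at the point where the integrality input is needed.

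The paper circumvents this as follows. From the minimal polynomial $A$ of $x$ one forms $A_k=\prod_j (X-f^{(k)}(x_j))$ and $A_l$ similarly. The periodicity $f^{(k-1)}(0)=f^{(l-1)}(0)$, together with a mod-$p$ congruence on $\cO_K$ (guaranteed here because $K/\IQ$ is unramified above $p$ by a Gleason-type argument), yields $A_k\equiv A_l\pmod{p^2\cO_K[X]}$. This is the arithmetic substitute for B\"ottcher integrality: since $(1+p^2U)^{1/p}\in\IZ[[U]]$, one obtains $\phi\in\cO_K[[1/X]]$ with $\phi^p=(A_l/A_k)^{p-1}$. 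The analytic continuation of $\phi$ is then carried out not via $\invrm$, but by building, for each $n$, a simply connected domain whose complement is $(f^{(n)})^{-1}(\hh)$ enlarged by finitely many arcs through the relevant orbit points; the capacity of this complement is controlled by Dubinin's bound after pulling back through a branch of $z\mapsto f^{(n)}(z)^{1/p^n}$. P\'olya--Bertrandias applied to $\phi$ then forces either the desired height lower bound, a degree drop $[K(f^{(k)}(x)):K]\le[K(x):K]/p$ (handled by induction), or preperiodicity. Your outline is missing both the $p^2$-congruence that produces an integral power series and the specific domain construction; the B\"ottcher map alone does not supply either.
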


Part (i) can be seen as a dynamical Schinzel--Zassenhaus property for
$f$. Part (ii) of the theorem is conjectured to hold for all rational
functions $f\in K(T)$ of degree $\ge 2$ with $[K(x):K]^2$ replaced by
$[K(x):K]$. This is called the Dynamical Lehmer Conjecture, see
Conjecture 3.25~\cite{Silverman:gtm241} or Conjecture
16.2~\cite{BIJMST:trends} for a higher dimensional version. The
Dynamical Lehmer Conjecture is already open in the classical case
$f=T^2$.

In Theorem~\ref{thm:preperiodic2} below we prove a more general version of
Theorem~\ref{thm:periodichgtlb}.

For a general rational function, M.~Baker~\cite[Theorem
1.14]{MBaker:lowerdyngreen} obtained a lower bound for the canonical
height proportional to $1/[K(x):K]$ outside an exceptional set of
controlled cardinality. 

Next we exhibit some cases where the Quill Hypothesis is satisfied.

\begin{lemma}
  \label{lem:quillhypothesis}
  Let $p$ be a prime number, let $K$ be a number field, and let
  $f=T^p+c\in K[T]$. Suppose that
  $f\not= T^2-2$. Then $f$ satisfies the Quill Hypothesis in both
  of the following cases. 
  \begin{enumerate}
  \item [(i)]  There exists a
    field embedding $\sigma_0\colon K\rightarrow \IR$ with
    $\pco^+(\sigma_0(f))$ bounded. 
  \item [(ii)]  There exists a
    field embedding $\sigma_0\colon K\rightarrow \IC$ with
    $\#\pco^+(\sigma_0(f)) \le 2p-2$.
 \end{enumerate}
\end{lemma}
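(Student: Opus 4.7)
The plan is to construct, in each case, an explicit hedgehog containing $\pco^+(\sigma_0(f))$, bound the number of quills $q$ from above, and then combine with an upper bound for $\cf(\sigma_0(f))$ to verify the Quill condition $q \log \cf(\sigma_0(f)) < \log 4$.

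In case (ii), each post-critical value $w \in \pco^+(\sigma_0(f))$ trivially lies on the single quill $[0,1] w$, so $\pco^+(\sigma_0(f))$ is contained in a hedgehog with $q \le \#\pco^+(\sigma_0(f)) \le 2p - 2$ quills. The Quill condition then reduces to the strict bound $\cf(\sigma_0(f)) < 2^{1/(p-1)}$. In case (i), the embedding $\sigma_0$ is real, so $\sigma_0(f) \in \IR[T]$ and $\pco^+(\sigma_0(f)) \subset \IR$. Being bounded, it lies in some interval $[-M,M]$, which is the union of two quills, so $q \le 2$ and the Quill condition becomes $\cf(\sigma_0(f)) < 2$. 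If in addition $\pco^+(\sigma_0(f))$ is confined to a single half-axis, then $q=1$ suffices and only $\cf(\sigma_0(f)) < 4$ is required, which is the path I would follow for the simpler ``one-sided'' subcases.

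The key tool for bounding $\cf$ is an escape-radius estimate. For $f = T^p + c$, whenever $R \ge 1$ satisfies $R^p - R \ge |c|$, one has $|f(z)| \ge |z|$ for every $|z| \ge R$, and iterating shows that the filled Julia set $\cK(\sigma_0(f))$ is contained in $\{|z| \le R\}$. Since $\lambda_{\sigma_0(f)}$ coincides with the Green's function of $\cK(\sigma_0(f))$ with pole at infinity and $\sigma_0(f)$ is monic, the supremum in the definition of $\cf(\sigma_0(f))$ is attained on $\cK(\sigma_0(f))$, giving $\cf(\sigma_0(f)) \le R$.

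The main obstacle is making the inequality strict. In case (ii) the optimal choice $R = 2^{1/(p-1)}$ satisfies $R^p - R = R$, so the hypothesis forces only $|c| \le 2^{1/(p-1)}$; a short additional argument is needed to rule out the equality case, showing that $|c| = 2^{1/(p-1)}$ combined with $\#\pco^+(\sigma_0(f)) \le 2p - 2$ forces $f = T^2 - 2$, which is excluded. In case (i), among real unicritical polynomials $T^p + c$ with bounded critical orbit, the equality $\cf(\sigma_0(f)) = 2$ occurs only in the Chebyshev case $c = -2$, $p = 2$, whose filled Julia set is exactly $[-2,2]$; this case is again excluded by hypothesis. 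I expect this boundary case analysis, rather than the hedgehog construction, to be the main technical step.
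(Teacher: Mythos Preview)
Your proposal is correct and follows essentially the same approach as the paper: the paper packages the escape-radius bound $\cf(T^p+c)\le R$ and the boundary-case analysis into auxiliary lemmas (Lemmas~\ref{lem:localhgtlb1}--\ref{lem:Cflessthan2}), proving $\cf(\sigma_0(f))<2^{1/(p-1)}$ whenever $\pco^+(\sigma_0(f))$ is bounded and $f\neq T^2-2$, and then applies this single strict bound in both cases (i) and (ii) with $q=2$ and $q=2p-2$ respectively. The only minor differences are that the paper derives $\cf\le R$ by a direct telescoping estimate rather than your Green's-function/filled-Julia-set argument, and its boundary analysis uses only boundedness of $\pco^+$ (the specific cardinality $\le 2p-2$ plays no role there).
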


Let $n\in\IN$ and let $G_n$ denote the monic polynomial whose roots
are precisely those $c\in\IC$ for which $0$ is $(T^2+c)$-periodic with
exact period $n$. Then $G_n\in\IZ[X]$ and work of
Lutzky~\cite{Lutzky:CountingHyperbolic} implies that $G_n$ has at
least one real root. In particular, the Quill hypothesis is met for
infinitely many quadratic polynomials $T^2+c$
with periodic critical point. It is a folklore conjecture that $G_n$ is
irreducible for all $n\in\IN$. Conditional on this conjecture any
$f=T^2+c$ with $c\not=-2$ for which $0$ is $f$-periodic satisfies the
Quill Hypothesis.

If $0$ is periodic of minimal period larger than 1, then $f = T^p + c$
is not conjugated to a Chebyshev polynomial or $T^p$. Using the lemma
above we find for each prime $p$ an $f$ not of this form that satisfies
the Quill Hypothesis.

The main technical results of this paper are in
Section~\ref{sec:proofs}. For example, we also cover some cases where
$0$ is an $f$-preperiodic point. We now draw several corollaries.

\begin{cor}
  \label{cor:irrfactorgrowth}
  Let $p$ be a prime number, let $K$ be a number field, and let
  $f=T^p+c\in K[T]$. Suppose that $0$ is an $f$-periodic point
  and that $f$ satisfies the Quill Hypothesis.
  Suppose $y\in K$.
  \begin{enumerate}
  \item[(i)] Let $y$ be an $f$-periodic point and $x\in\overline K$
    with $f^{(n)}(x)=y$ with $n\ge 0$ minimal, then $[K(x):K]\ge
    p^{n-\mathrm{per}(y)}$.    
  \item [(ii)]
    Let $y$ be an $f$-preperiodic point and not $f$-periodic, then $f^{(n)}-y\in K[T]$ is
    irreducible for all $n\in\IN$.
  \item [(iii)]
    Let $y$ be  an $f$-wandering point, then
    there exists $\kappa=\kappa(f,y)>0$
    such that for all  $n\in\IN$, each irreducible factor of
    $f^{(n)} - y \in K[T]$ has degree at least $\kappa p^n$.
  \end{enumerate}
\end{cor}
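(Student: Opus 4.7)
\emph{Part (iii).} My plan is to deduce part (iii) directly from Theorem~\ref{thm:periodichgtlb}(i). Let $g\in K[T]$ be an irreducible factor of $f^{(n)}-y$ and pick any root $x\in\overline{K}$ of $g$, so that $[K(x):K]=\deg g$ and $f^{(n)}(x)=y$. Since $y$ is $f$-wandering its forward orbit is infinite, and the forward orbit of $x$ contains it, so $x$ is $f$-wandering. Because $0$ is $f$-periodic the relation $f^{(\mathrm{per}(0))}(0)=0$ is monic in $c$, so $c$ is an algebraic integer; after enlarging $K$ we may assume $y\in\mathcal{O}_K$, and then $x$, as a root of the monic polynomial $f^{(n)}(T)-y\in\mathcal{O}_K[T]$, is an algebraic integer. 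Theorem~\ref{thm:periodichgtlb}(i) then gives $\kappa_0=\kappa_0(f)>0$ with $\lambda^{\mathrm{max}}_f(x)\ge\kappa_0/[K(x):K]$. The functional equation $\lambda_{\sigma f}(\sigma(f^{(n)}(x)))=p^n\lambda_{\sigma f}(\sigma x)$ for each archimedean $\sigma$ yields, upon taking the maximum, $\lambda^{\mathrm{max}}_f(y)=p^n\lambda^{\mathrm{max}}_f(x)$. Combining these, $\deg g\ge \kappa_0 p^n/\lambda^{\mathrm{max}}_f(y)$, so one takes $\kappa(f,y):=\kappa_0/\lambda^{\mathrm{max}}_f(y)>0$; positivity of the denominator follows because a wandering algebraic point has positive archimedean local canonical height at each embedding.

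\emph{Part (i).} For $n\le m:=\mathrm{per}(y)$ the bound $p^{n-m}\le 1$ is trivial. For $n>m$, the plan is to exploit the identity $f^{(m)}(y)=y$, which yields the factorization in $K[T]$
\[
f^{(n)}(T)-y=(f^{(n-m)}(T)-y)\cdot Q_n(T),\qquad Q_n(T):=P(f^{(n-m)}(T)),
\]
where $P(U):=(f^{(m)}(U)-y)/(U-y)\in K[U]$ is of degree $p^m-1$. By minimality of $n$, $f^{(n-m)}(x)\ne y$, so $x$ is a root of $Q_n$ and $w:=f^{(n-m)}(x)$ is a root of $P$ distinct from $y$. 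Since $f^{(m)}(w)=y$ is periodic while $w$ does not lie on the cycle of $y$, the point $w$ is $f$-preperiodic but not $f$-periodic. Applying part (ii) to $w$ over the field $K(w)$ shows that $f^{(n-m)}(T)-w$ is irreducible over $K(w)$, so $[K(x):K(w)]=p^{n-m}$ and hence $[K(x):K]\ge p^{n-m}$.

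\emph{Part (ii), the main obstacle.} I plan to induct on $n$. For the base case $n=1$, $f(T)-y=T^p-(y-c)$, and Capelli's theorem reduces irreducibility over $K$ to verifying that $y-c$ is not a $p$-th power in $K$ (and, when $p=2$, not of the form $-4b^4$). A $K$-rational $p$-th root would be a $K$-rational preimage of $y$, producing a preperiodic orbit in $K$ whose structure should be incompatible with $y$ being preperiodic but not periodic, together with the Quill Hypothesis and the fact that $0$ is $f$-periodic (so the postcritical set is finite and tightly controlled). The inductive step runs similarly via $f^{(n)}(T)-y=f^{(n-1)}(T)^p-(y-c)$, combined with the inductive irreducibility of $f^{(n-1)}-y$, reducing again to a Capelli non-$p$-th-power condition in a controlled field extension. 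The hard part will be establishing this Capelli non-obstruction uniformly at each stage; this is where the full strength of the standing hypotheses on $f$ must enter, and it constitutes the main technical content of the corollary.
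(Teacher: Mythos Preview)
Your approach to (iii) in the integral case matches the paper's, but the reduction ``after enlarging $K$ we may assume $y\in\mathcal{O}_K$'' is a genuine gap: $y$ is a fixed element of $K$, and if it is not an algebraic integer then no extension of $K$ will make it one. The paper treats non-integral $y$ by a separate non-Archimedean argument: choose a finite place $v$ and an embedding $\sigma\colon K\to\mathbb{C}_v$ with $|\sigma(y)|_v>1$; then $|\sigma(x)|_v>1$ and $|\sigma(y)|_v=|\sigma(x)|_v^{p^n}$, which forces the ramification index of $K(x)/K$ at a prime above $v$ to grow like $p^n$. (A minor point: your justification that $\lambda_f^{\mathrm{max}}(y)>0$ is misstated---it is not true that a wandering point has positive archimedean local height at \emph{every} embedding; what is true, and sufficient, is that an integral wandering point has positive archimedean local height at \emph{some} embedding, since all its non-Archimedean local heights vanish while $\hat h_f(y)>0$.)

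The more serious problem is part (ii). You identify it as ``the main technical content'' and propose to attack it via Capelli's criterion, but this is the wrong route and you do not carry it out. The paper's proof of (ii) is a two-line consequence of Theorem~\ref{thm:canonicalhgtlb}: if $x$ is any root of $f^{(n)}-y$, then $\mathrm{preper}(x)=n+\mathrm{preper}(y)\ge n+1$, and the bound $[K(x):K]\ge p^{\mathrm{preper}(x)-1}$ from Theorem~\ref{thm:canonicalhgtlb} gives $[K(x):K]\ge p^n=\deg(f^{(n)}-y)$, forcing irreducibility. All the work has already been absorbed into Theorem~\ref{thm:canonicalhgtlb} (which in turn rests on Proposition~\ref{prop:prefinal}); (ii) is a corollary, not a new technical obstacle. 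Your proposed Capelli route would need to control $p$-th power obstructions in a tower of fields with no obvious leverage from the hypotheses.

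Your argument for (i), reducing to (ii) over $K(w)$ via the factorisation $f^{(n)}-y=(f^{(n-m)}-y)\cdot P(f^{(n-m)})$, is correct and rather elegant once (ii) is available; it is a genuinely different path from the paper, which instead argues directly that minimality of $n$ forces $\mathrm{preper}(x)\ge n-m+1$ and then invokes Theorem~\ref{thm:canonicalhgtlb}. Both work, but note that your route makes (i) logically dependent on (ii), so the gap in (ii) propagates.
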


For example, if $f=T^2-1$ then $f^{(n)}-1$ is irreducible in $\IQ[T]$
for all $n\in\IN$. Stoll~\cite{Stoll:GaloisIterated} showed that
iterates of certain quadratic polynomials are irreducible over $\IQ$.
Part (iii) of Corollary~\ref{cor:irrfactorgrowth} was proved in more
general form by Jones and Levy \cite{JonesLevy:17} using different methods.


The method presented here is effective in that it can produce explicit
lower bounds for heights. In the next two corollaries we take a closer
look at the quadratic polynomial $f=T^2-1$.

\begin{cor}
  \label{cor:wanderingminus1}
  Let $f = T^2-1$.
  \begin{enumerate}
  \item [(i)]   Let $x\in\IQbar$ be  an algebraic integer and an
    $f$-wandering point, then
    \begin{equation*}
      \lambda^{\mathrm{max}}_{f}(x)\ge \frac{\log(4^{11/10}/(\sqrt 5+1))}{48}
      \frac{1}{[\IQ(x):\IQ]}> \frac{0.007}{[\IQ(x):\IQ]}.
    \end{equation*}
  \item[(ii)] Let $x\in\IQbar$ be an $f$-wandering point, then
    \begin{equation*}
      \hat h_f(x)\ge \frac{\log(4^{11/10}/(\sqrt 5+1))}{48}
      \frac{1}{[\IQ(x):\IQ]^2}> \frac{0.007}{[\IQ(x):\IQ]^2}.
    \end{equation*}
  \end{enumerate}
\end{cor}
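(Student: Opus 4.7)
The plan is to obtain Corollary~\ref{cor:wanderingminus1} as a direct specialisation of Theorem~\ref{thm:periodichgtlb} to $f=T^{2}-1$, the only new content being the explicit numerical value of $\kappa(f)$.

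First I would check the hypotheses. Here $p=2$ is prime, $K=\IQ$, and $0$ is $f$-periodic of period $2$ since $f(0)=-1$ and $f(-1)=0$. The Quill Hypothesis holds for the identity embedding: the post-critical set is $\pco^{+}(f)=\{0,-1\}\subset [-1,0]=[0,1](-1)$, which is a hedgehog with $q=1$ quill, and by Lemma~\ref{lem:iteratefabsbound} we have $\cf(f)\le (\sqrt{5}+1)/2<4$, so
\begin{equation*}
  q\log \cf(f) \le \log\frac{\sqrt 5+1}{2} < \log 4.
\end{equation*}
With the hypotheses in place, parts (i) and (ii) of the corollary follow at once from the corresponding parts of Theorem~\ref{thm:periodichgtlb}, for some $\kappa=\kappa(f)>0$.

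The substance of the proof is therefore to track the constants in the proof of Theorem~\ref{thm:periodichgtlb} through for this concrete $f$. I would revisit the argument in Section~\ref{sec:proofs}, substituting $p=2$, the period of $0$, the quill count $q=1$, and $\cf(f)\le (\sqrt 5+1)/2$ wherever they appear. The gap quantity $\log 4 - q\log\cf(f)$ produced by Dubinin's Theorem, combined with the fractional weighting picked up from the iterate-based construction, will give the numerator $\log(4^{11/10}/(\sqrt 5+1))$; the denominator $48$ will assemble from the combinatorial factors in the construction (degree $p=2$, period, and the bookkeeping factors relating $\lambda_f^{\mathrm{max}}$ to $\hat h_f$ that account for the $[K(x):K]^{2}$ in part (ii) versus $[K(x):K]$ in part (i)). Finally I would verify the numerical inequality $\log(4^{11/10}/(\sqrt 5+1))/48 > 0.007$ by a short estimate using $4/(\sqrt 5+1)=\sqrt 5-1$.

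The main obstacle is purely bookkeeping: no new analytic input beyond Theorem~\ref{thm:periodichgtlb} and Lemma~\ref{lem:iteratefabsbound} is needed, but the chain of inequalities in the general proof must be rewritten with all constants carried as symbolic quantities, then evaluated at the specific parameters of $f=T^{2}-1$, so that each round of estimation uses the sharpest version available for this case rather than a generic upper bound.
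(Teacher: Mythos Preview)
Your plan is essentially the paper's approach: verify the Quill Hypothesis with $q=1$ and $\cf(f)\le(\sqrt5+1)/2$, then rerun the height argument with explicit constants. The paper, however, does not track constants through the proof of Theorem~\ref{thm:periodichgtlb} (whose auxiliary parameters $\epsilon_0,\epsilon$ would then have to be optimised); it goes directly to Proposition~\ref{prop:prefinal} with the concrete choices $(k,l)=(1,3)$, justified by $f^{(2)}(0)=0$ and $a\equiv a^{4}\pmod{2}$ for all $a\in\IZ$, and with $n$ minimal such that $2^{n}\ge 3[\IQ(x):\IQ]$. Conclusion~(A) then yields the explicit bound, and conclusion~(B) is handled by induction on $[\IQ(x):\IQ]$.

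Your description of where the constant $48$ comes from is off: it has nothing to do with passing from $\lambda_f^{\mathrm{max}}$ to $\hat h_f$. It is $2^{l+n}\le 2^{3}\cdot 6[\IQ(x):\IQ]=48[\IQ(x):\IQ]$, the $2^{3}$ being the factor $p^{l}$ in (\ref{eq:lambdaprelb}) and the $6$ coming from $2^{n}<6[\IQ(x):\IQ]$ by minimality of $n$. Likewise the exponent $11/10$: since $2[\IQ(x):\IQ]2^{-n}\le 2/3$, the first term inside the bracket of (\ref{eq:lambdaprelb}) is at least $(\log 4)/(5/3)=\log 4^{3/5}$, and $4^{3/5}\cdot 2=4^{11/10}$. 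Part~(ii) then follows from part~(i) exactly as in Theorem~\ref{thm:preperiodic2}(ii), with the non-integral case handled by the $2$-adic argument there.
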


\begin{cor}
  \label{cor:preperminus1}
  Let $f=T^2-1\in\IQ[T]$ and let
  $x\in\IQbar$ be an $f$-preperiodic point, then
  \begin{equation*}
    [\IQ(x):\IQ] \ge \frac 12 \max\{2^{\mathrm{preper}(x)},\mathrm{per}(x)\}. 
  \end{equation*}
\end{cor}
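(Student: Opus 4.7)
The plan is to establish the two lower bounds
$[\IQ(x):\IQ] \ge 2^{\mathrm{preper}(x)-1}$ and
$[\IQ(x):\IQ] \ge \mathrm{per}(x)/2$
separately; their maximum is the claimed bound, and each is trivial where its exponent is non-positive.

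For the first inequality, set $m := \mathrm{preper}(x)$ (assuming $m \ge 1$; the case $m = 0$ is trivial) and consider $z := f^{(m-1)}(x)$. By minimality of $m$, $f(z)$ is $f$-periodic while $z$ itself is not, so $z$ is $f$-preperiodic but not $f$-periodic. The rational preperiodic points of $T^2-1$ are exactly $\{0,-1,1\}$, of which only $1$ is non-periodic. Hence either $z = 1$ or $z \notin \IQ$. If $z = 1$, Corollary~\ref{cor:irrfactorgrowth}(ii) over $K = \IQ$ gives that $f^{(m-1)}(T) - 1 \in \IQ[T]$ is irreducible of degree $2^{m-1}$, whence $[\IQ(x):\IQ] = 2^{m-1}$. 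If $z \notin \IQ$, the same corollary applied over $K = \IQ(z)$ gives $[\IQ(x):\IQ(z)] = 2^{m-1}$, which combined with $[\IQ(z):\IQ] \ge 2$ yields the stronger $[\IQ(x):\IQ] \ge 2^m$.

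For the second inequality, let $y := f^{(m)}(x) \in \IQ(x)$; since $[\IQ(x):\IQ] \ge [\IQ(y):\IQ]$ it suffices to show $[\IQ(y):\IQ] \ge n/2$ for any $f$-periodic $y$ of exact period $n := \mathrm{per}(x)$. Using that the only rational periodic points of $T^2-1$ are $\{0,-1\}$ (both of period $2$), any periodic $y$ of period $n \in \{1,3,4\}$ is irrational, giving $[\IQ(y):\IQ] \ge 2 \ge n/2$; the case $n = 2$ is immediate. The main obstacle is $n \ge 5$. My plan here is to analyze the joint action of $G := \gal{\IQbar/\IQ}$ and $f$ on the $f$-cycle $C$ of $y$: since $f$ has rational coefficients, the cycle-stabilizer $G_C \subseteq G$ acts on $C$ by cyclic shifts, producing an integer $s \mid n$ (the smallest positive shift realized in $G_C$) together with the factorization $[\IQ(y):\IQ] = (n/s) \, [G : G_C]$. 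The hard step is ruling out configurations in which $s$ is large compared to $[G:G_C]$; I expect to force this by applying Corollary~\ref{cor:irrfactorgrowth}(ii) over $\IQ(y)$ to the off-cycle preimage $-f^{(n-1)}(y)$ of $y$, which the identity $f(-T) = f(T)$ makes $f$-preperiodic but not $f$-periodic whenever $y \ne -1$, and then propagating the resulting irreducibilities through the cyclic-shift structure on $C$.
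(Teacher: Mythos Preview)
Your argument for the preperiod bound is correct, though circuitous: Corollary~\ref{cor:irrfactorgrowth}(ii) is itself proved via Theorem~\ref{thm:canonicalhgtlb}, so you are implicitly invoking the latter anyway. The paper proceeds more directly, applying Theorem~\ref{thm:canonicalhgtlb} with $K=\IQ$ and $(k,l)=(1,3)$; after checking that $(k,l)=(1,3)$ satisfies hypothesis~(i) (since $a\equiv a^{4}\imod{2}$ for $a\in\IZ$) and that the Quill Hypothesis holds (one quill $[-1,0]$, $\cf(f)\le(1+\sqrt5)/2<4$), the two conclusions of that theorem give $[\IQ(x):\IQ]\ge 2^{\mathrm{preper}(x)-1}$ and $[\IQ(x):\IQ]\ge \mathrm{per}(x)/2$ at once.

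Your treatment of the period bound, by contrast, is not a proof but a plan, and the plan does not close. In your decomposition $[\IQ(y):\IQ]=(n/s)\,[G:G_C]$ the case to exclude is $[G:G_C]=1$ with $s\ge 3$: a single Galois-stable $n$-cycle on which the absolute Galois group acts through a cyclic quotient of order $n/s<n/2$. Nothing you propose rules this out. Applying Corollary~\ref{cor:irrfactorgrowth}(ii) over $\IQ(y)$ to the strictly preperiodic point $-f^{(n-1)}(y)$ yields irreducibility of $f^{(j)}+f^{(n-1)}(y)$ in $\IQ(y)[T]$, i.e.\ information about degrees of extensions \emph{above} $\IQ(y)$; it says nothing about $[\IQ(y):\IQ]$ itself, and there is no evident mechanism for ``propagating'' that upward information back down through the cyclic-shift structure on $C$. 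The inequality $\mathrm{per}(x)\le (l-k)[K(x):K]$ you need is exactly conclusion~(C) of Proposition~\ref{prop:prefinal}, where it arises from the capacity estimate together with the forced collision $f^{(k)}(\tau(x))=f^{(l)}(x)$ of Galois conjugates when the auxiliary power series turns out to be rational---content not recoverable from Corollary~\ref{cor:irrfactorgrowth}. Invoke Theorem~\ref{thm:canonicalhgtlb} directly.
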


We now discuss the method of proof. It is an adaptation of Dimitrov's
proof of the Schinzel--Zassenhaus Conjecture~\cite{dimitrov:SZ}.

Suppose we are given $f$ as in Theorem~\ref{thm:periodichgtlb} and
an algebraic integer $x$ such that $\lambda_f^{\mathrm{max}}(x)$ is
sufficiently small. For simplicity say $f=T^p+c\in \IQ[T]$.

The $\IQ$-minimal polynomial $A\in\IQ[X]$ of $x$ factors as
$(X-x_1)\cdots(X-x_D)$ with $x_1,\ldots,x_D\in\IC$. For an integer
$k\ge 1$ we set $A_k = (X-f^{(k)}(x_1))\cdots(X-f^{(k)}(x_D))$. The
basic idea is to consider a $p$-th root $\phi$ of the rational
function $(A_l/A_k)^{p-1}=1+O(1/X)$ for certain integers $1\le k<l$.
\textit{A priori}, we may consider $\phi$ as a formal power series in
$\IQ[[1/X]]$. However, an appropriate choice of $k$ and $l$ will imply
$\phi\in\IZ[[1/X]]$. This formal power series is constructed in
Section~\ref{sec:powerseries} where we also analyze integrality
properties via congruence conditions. The power series $\phi$
respresents a holomorphic function on the complement of a large enough
disk. This step is similar as in
Dimitrov's proof.

Our next step
will be to construct a sufficiently nice domain $U\subset\IC$ on which
$\phi$ represents a holomorphic function.
This construction is done in Section~\ref{sec:construction} and it is
the main new aspect of this paper. Let $\widehat\IC=\IC\cup\{\infty\}$
denote the extended complex plane. The construction of $U$ depends on
a parameter $n\in\IN$. It takes as an input a hedgehog $I_0$ that
contains the post-critical set of $f$ (or at least part of it).
The critical values of $f^{(n)}$ are contained in the preimage
$(f^{(n)})^{-1}(I_0)$; which one should think of as a finite
topological tree. 
Thus
$f^{(n)}$ has no critical points
on $\IC\ssm (f^{(n)})^{-1}(I_0)$.
Moreover, adding the point at infinity will make
$\IC\ssm (f^{(n)})^{-1}(I_0)$ simple connected. 

However, this complement is not yet the desired $U$. We need to ensure
that $U$ does not contain the $f^{(k)}(x_i)$ and $f^{(l)}(x_j)$ so
that $A_l/A_k$ has neither pole nor zero on $U$. This can be done by
augmenting $(f^{(n)})^{-1}(I_0)$ to a larger finite topological tree
while retaining the property that $U\cup\{\infty\}$ is simply
connected. By monodromy, the $p$-th root $\phi$ extends to a
holomorphic function on $U$.

Dubinin's Theorem is our tool to bound from above the capacity of a
suitable hedgehog. We will be able to relate  the capacity of  $\IC\ssm
U$ to the capacity of a hedgehog
using transformation properties of the capacity under certain holomorphic
mappings. 


Finally, in Section~\ref{sec:proofs} we prove the results stated in
the introduction. As did Dimitrov, we use the P\'olya--Bertrandias
Theorem. It implies that $\phi$ is a rational function if
$\mathrm{cap}(\IC\ssm U)<1$. The Quill Hypothesis, the post-critical
finite hypothesis on $f$, and the fact that
$\lambda_f^{\mathrm{max}}(x)$ is small is used to ensure the capacity inequality. Here we
also choose the parameter $n$. If $\phi$, whose $p$-th power is
$(A_l/A_k)^{p-1}$, is a rational function then 
there must be cancellation and $A_k$ and $A_l$ have common roots.
For example, if $f^{(k)}(x_i) =
f^{(l)}(x_j)$ for some $i,j$ there is a collision of Galois conjugates
which entails that $x$ is $f$-preperiodic. 

\section*{Acknowledgments}

We thank Vesselin Dimitrov for suggesting to the second-named author
to investigate the dynamical Schinzel--Zassenhaus for $f=T^2-1$. The
authors thank Holly Krieger for comments on a draft of this text.
The first-named author has
received funding from the Swiss National Science Foundation project
n$^\circ$ 200020\_184623.


\section{Constructing the Power Series}
\label{sec:powerseries}

In this section we construct an auxiliary power series. Our approach
largely follows the approach laid out in Section 2~\cite{dimitrov:SZ}
and in particular Proposition 2.2~\textit{loc.cit.} We adapt it to the
dynamical setting. Dimitrov's crucial observation is that
$\sqrt{1+4X}$ is a formal power series in $X$ with integral
coefficients.

Throughout this section let $R$ denote an integrally closed domain of
characteristic $0$. Let $p$ be a prime number.

For $D\in\IN$ and $j\in \{0,\ldots,D\}$ let $e_j
\in\IZ[X_1,\ldots,X_D]$ denote the elementary symmetric polynomial of
degree $j$. We set
\begin{equation*}
  \Psi_{p,j} = \frac 1p \left( e_j(X_1,\ldots,X_D)^p - e_j(X_1^p,\ldots,X_D^p)\right).
\end{equation*}
So $\Psi_{p,j}\in \IZ[X_1,\ldots,X_D]$ by Fermat's Little Theorem
and 
\begin{equation}
  \label{eq:Psi2}
  e_j(X_1,\ldots,X_D)^p = e_j(X_1^p,\ldots,X_D^p) + p \Psi_{p,j}. 
\end{equation}

If $R$ is subring of a ring $S$ and $a,b\in S$ we write $a\equiv b
\imod{p}$ or $a\equiv b\imod {pS}$ to signify $a-b\in pS$.

\begin{lemma}
  \label{lem:powersumlemma}
  For all $k\in\IN$ and all $j\in
  \{0,\ldots,D\}$ 
  we have
  \begin{equation}
    \label{eq:powersumlemma}
    e_j(X_1,\ldots,X_D)^{p^k} \equiv
    e_j(X_1^{p^k},\ldots,X_D^{p^k})
    + p \Psi_{p,j}^{p^{k-1}}
    \imod {p^2}. 
  \end{equation}
\end{lemma}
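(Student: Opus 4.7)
The plan is to induct on $k$, with the base case $k=1$ being exactly the defining identity \eqref{eq:Psi2} (with $\Psi_{p,j}^{p^0}=\Psi_{p,j}$), which even holds as an equality in $\mathbb{Z}[X_1,\ldots,X_D]$, not merely modulo $p^2$.

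For the inductive step, I would raise the congruence for $k$ to the $p$-th power. A standard computation shows that $a\equiv b\imod{p^2}$ implies $a^p\equiv b^p\imod{p^2}$ (in fact modulo $p^3$), since writing $a=b+p^2c$ and expanding yields extra factors of $p^2$ beyond the first term. Applying this to the inductive hypothesis gives
\begin{equation*}
  e_j(X_1,\ldots,X_D)^{p^{k+1}} \equiv \bigl(e_j(X_1^{p^k},\ldots,X_D^{p^k}) + p\Psi_{p,j}^{p^{k-1}}\bigr)^p \imod{p^2}.
\end{equation*}
Expanding the right-hand side via the binomial theorem, every cross term carries a factor $\binom{p}{i} p^i$ with $i\ge 1$, so contains at least $p^2$, and therefore vanishes modulo $p^2$. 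Thus the right-hand side is congruent to $e_j(X_1^{p^k},\ldots,X_D^{p^k})^p$ modulo $p^2$. Now apply \eqref{eq:Psi2} with $X_i$ replaced by $X_i^{p^k}$ to rewrite this as $e_j(X_1^{p^{k+1}},\ldots,X_D^{p^{k+1}}) + p\,\Psi_{p,j}(X_1^{p^k},\ldots,X_D^{p^k})$.

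At this point I would need to identify $\Psi_{p,j}(X_1^{p^k},\ldots,X_D^{p^k})$ with $\Psi_{p,j}^{p^k}$ modulo $p$. This is the only nontrivial step, and it comes from the Frobenius endomorphism of $\mathbb{F}_p[X_1,\ldots,X_D]$: since $\Psi_{p,j}$ has integer coefficients, reducing modulo $p$ and applying Frobenius $k$ times yields $\Psi_{p,j}(X_1,\ldots,X_D)^{p^k}\equiv\Psi_{p,j}(X_1^{p^k},\ldots,X_D^{p^k})\imod{p}$. Multiplying this congruence by $p$ upgrades it to the desired congruence modulo $p^2$, and substituting back closes the induction.

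I do not anticipate any real obstacle; the argument is a clean interplay between \eqref{eq:Psi2}, the mod-$p^2$ binomial expansion, and the Frobenius automorphism. The one place to be careful is that the mod-$p^2$ congruence must be preserved under the $p$-th power, which is precisely where the lemma's polynomial (as opposed to merely integer) formulation pays off, since all computations take place in $\mathbb{Z}[X_1,\ldots,X_D]$ where no denominators appear.
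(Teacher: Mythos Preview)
Your proposal is correct and follows essentially the same approach as the paper: induction on $k$ with base case \eqref{eq:Psi2}, raising to the $p$-th power using $(x+py)^p\equiv x^p\imod{p^2}$, reapplying \eqref{eq:Psi2} at $X_i^{p^k}$, and then the Frobenius congruence $\Psi_{p,j}(X_1^{p^k},\ldots,X_D^{p^k})\equiv\Psi_{p,j}^{p^k}\imod{p}$ to finish. One small remark: your closing comment that the polynomial setting is what makes the $p$-th power preserve the mod-$p^2$ congruence is not quite the point---that step works in any commutative ring, since $a-b\in p^2R$ forces $a^p-b^p=(a-b)(a^{p-1}+\cdots+b^{p-1})\in p^2R$.
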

\begin{proof}
  By (\ref{eq:Psi2}) our claim (\ref{eq:powersumlemma}) holds for
  $k=1$.

  We assume that (\ref{eq:powersumlemma}) holds for $k\ge 1$
  and will deduce it
  for $k+1$. So
  \begin{equation}    
    \label{eq:symm1}
    \begin{aligned}
    e_j(X_1,\ldots,X_D)^{p^{k+1}}
    &\equiv \left(
      e_j(X_1^{p^k},\ldots,X_D^{p^k})
      +p\Psi_{p,j}^{p^{k-1}}\right)^p \imod {p^2}
     \\
     &\equiv
     e_j(X_1^{p^k},\ldots,X_D^{p^k})^p \imod{p^2}
   \end{aligned}
 \end{equation}
 as $(x+py)^p \equiv x^p \imod {p^2}$.  

  We evaluate $\Psi_{p,j}$ at $(X_1^{p^k},\ldots,X_D^{p^k})$
  and use (\ref{eq:Psi2}) to  find 
  \begin{equation}
    \label{eq:symm2}
    \begin{aligned}
      e_j(X_1^{p^k},\ldots,X_D^{p^k})^p
      = e_j(X_1^{p^{k+1}},\ldots,X_D^{p^{k+1}}) +
      p \Psi_{p,j}(X_1^{p^k},\ldots,X_D^{p^k}).
    \end{aligned}
  \end{equation}
  
  Finally,  
  $\Psi_{p,j}(X_1^{p^k},\ldots,X_D^{p^k}) \in \Psi_{p,j}^{p^k}
  + p\IZ[X_1,\ldots,X_D]$ and hence
  $e_j(X_1^{p^k},\ldots,X_D^{p^k})^p
      \equiv e_j(X_1^{p^{k+1}},\ldots,X_D^{p^{k+1}}) +
      p \Psi_{p,j}^{p^k} \imod{p^2}$ from  (\ref{eq:symm2}).  
  Thus (\ref{eq:powersumlemma}) for $k+1$
  follows from
  (\ref{eq:symm1}).
\end{proof}

\begin{lemma}
  \label{lem:powerpoly}
  Let $S$ be a ring of which $R$ is a subring and
  suppose $x_1,\ldots,x_D\in S$ such that
  $(X-x_1)\cdots(X-x_D)\in R[X]$. 
  Let $p$ be a prime number. 
  \begin{enumerate}
  \item [(i)]
    The value of a
    symmetric polynomial in $R[X_1,\ldots,X_D]$ evaluated at
    $(x_1,\ldots,x_D)$ lies in $R$. 
  \item[(ii)]   
    Let $k,l\in\IN$ with
    $a^{p^{k-1}}\equiv a^{p^{l-1}}\imod{pR}$ for all $a\in R$.  
    For all $j\in \{0,\ldots,D\}$ we have
    \begin{equation*}
      e_j(x_1^{p^k},\ldots,x_D^{p^k}) \equiv
      e_j(x_1^{p^l},\ldots,x_D^{p^l}) 
      \imod{p^2R}.
    \end{equation*}
  \end{enumerate}
\end{lemma}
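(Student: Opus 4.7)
The plan is to derive both parts from the fundamental theorem of symmetric polynomials together with the congruence already established in Lemma~\ref{lem:powersumlemma}.

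For (i) I would invoke the fundamental theorem of symmetric polynomials, which holds over an arbitrary commutative ring: any symmetric $F\in R[X_1,\ldots,X_D]$ equals $G(e_1,\ldots,e_D)$ for some $G\in R[Y_1,\ldots,Y_D]$. The hypothesis that $(X-x_1)\cdots(X-x_D)\in R[X]$ is the statement that $e_j(x_1,\ldots,x_D)\in R$ for every $j$ (up to sign), hence
\begin{equation*}
  F(x_1,\ldots,x_D)=G(e_1(x_1,\ldots,x_D),\ldots,e_D(x_1,\ldots,x_D))\in R.
\end{equation*}

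For (ii), the first step is to observe that the polynomial $\Psi_{p,j}\in\IZ[X_1,\ldots,X_D]$ is symmetric, because it is $p^{-1}$ times the difference of the two symmetric polynomials $e_j(X_1,\ldots,X_D)^p$ and $e_j(X_1^p,\ldots,X_D^p)$. Therefore, by (i), both
\begin{equation*}
  a := e_j(x_1,\ldots,x_D)\in R \qquad\text{and}\qquad b := \Psi_{p,j}(x_1,\ldots,x_D)\in R.
\end{equation*}
The congruence \eqref{eq:powersumlemma} says that
\begin{equation*}
  e_j(X_1,\ldots,X_D)^{p^k}-e_j(X_1^{p^k},\ldots,X_D^{p^k})-p\,\Psi_{p,j}^{p^{k-1}}
\end{equation*}
lies in $p^2\IZ[X_1,\ldots,X_D]$; dividing by $p^2$ gives a symmetric polynomial in $\IZ[X_1,\ldots,X_D]$, and (i) applied to that polynomial shows that after evaluating at $(x_1,\ldots,x_D)$ the resulting element of $S$ actually lies in $R$. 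Consequently
\begin{equation*}
  a^{p^k}\equiv e_j(x_1^{p^k},\ldots,x_D^{p^k})+p\,b^{p^{k-1}}\imod{p^2 R},
\end{equation*}
and the same congruence holds with $k$ replaced by $l$.

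The final step compares these two congruences using the hypothesis $a^{p^{k-1}}\equiv a^{p^{l-1}}\imod{pR}$ applied to $a$ and separately to $b$. The elementary fact that $u\equiv v\imod{pR}$ implies $u^p\equiv v^p\imod{p^2R}$ (every non-extreme binomial term contributes a factor $p^2$) then yields $a^{p^k}\equiv a^{p^l}\imod{p^2R}$, while multiplying the congruence for $b$ by $p$ gives $p\,b^{p^{k-1}}\equiv p\,b^{p^{l-1}}\imod{p^2 R}$. Subtracting the two displayed evaluations and inserting these two comparisons produces the desired $e_j(x_1^{p^k},\ldots,x_D^{p^k})\equiv e_j(x_1^{p^l},\ldots,x_D^{p^l})\imod{p^2 R}$. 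The only mild subtlety throughout is that the congruences must be taken modulo $p^2R$ rather than $p^2S$; this is precisely where (i) is invoked to pull everything back into $R$.
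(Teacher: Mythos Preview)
Your proof is correct and follows essentially the same route as the paper: both parts (i) and (ii) rest on the fundamental theorem of symmetric polynomials, Lemma~\ref{lem:powersumlemma}, and the elementary fact that $u\equiv v\imod{pR}$ implies $u^p\equiv v^p\imod{p^2R}$. If anything, you are more explicit than the paper about the one subtle point---that the quotient by $p^2$ of the polynomial identity in Lemma~\ref{lem:powersumlemma} is itself a symmetric polynomial with integer coefficients, so (i) applies to pull the evaluated congruence down from $p^2S$ to $p^2R$---whereas the paper passes over this in the phrase ``Lemma~\ref{lem:powersumlemma} implies.''
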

\begin{proof}
  An elementary symmetric polynomial in $R[X_1,\ldots,X_D]$ lies in $R[e_1,\ldots,e_D]$.
  We conclude (i) as
  $e_1(x_1,\ldots,x_D),\ldots,e_D(x_1,\ldots,x_D)\in R$ by hypothesis.

  For (ii) 
  observe that $\Psi_{p,j}$ is  symmetric
  so $\Psi_{p,j}(x_1,\ldots,x_D)\in R$ by part (i).
  For the same reason $e_j(x_1^{p^k},\ldots,x_D^{p^k})\in R$
  and $e_j(x_1^{p^l},\ldots,x_D^{p^l})\in R$.

  By the hypothesis in (ii) we conclude
  $\Psi_{p,j}(x_1,\ldots,x_D)^{p^{k-1}}\equiv
  \Psi_{p,j}(x_1,\ldots,x_D)^{p^{l-1}} \imod{p R}$.  
  Lemma~\ref{lem:powersumlemma} implies
  \begin{equation}
    \label{eq:powersummodpsqr}
    e_j(x_1,\ldots,x_D)^{p^k} - e_j(x_1^{p^k},\ldots,x_D^{p^k})
    \equiv
    e_j(x_1,\ldots,x_D)^{p^l} - e_j(x_1^{p^l},\ldots,x_D^{p^l})
    \imod{p^2 R}.
  \end{equation}

  Raising 
  $a^{p^{k-1}}\equiv a^{p^{l-1}}\imod{p R}$ to the $p$-th power gives
  $a^{p^k} \equiv a^{p^l} \imod {p^2 R}$ for all $a\in R$. 
  In particular, 
  $e_j(x_1,\ldots,x_D)^{p^k}$ and 
  $e_j(x_1,\ldots,x_D)^{p^l}$ are equivalent modulo $p^2$.
  Part (ii) now follows from (\ref{eq:powersummodpsqr}).\qedhere
\end{proof}


\begin{lemma}
  \label{lem:congpsqrlemma}
  Let 
  $f = T^p+c\in R[T]$.
  We have
  \begin{equation*}
    f^{(k)} \equiv \left(T^{p^{k-1}}+f^{(k-1)}(0) \right)^p+c \imod {p^2R[T]}  
  \end{equation*}
  for all $k\in \IN$.
\end{lemma}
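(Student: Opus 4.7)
The plan is to reduce the desired $p^2$-congruence to a simpler $p$-congruence and then use the standard lifting $(a+pb)^p\equiv a^p \imod{p^2}$ valid in any commutative ring of characteristic $0$ containing $p$. Concretely, I would first establish the following auxiliary claim by an induction on $k$:
\begin{equation*}
  f^{(k)}(T) \equiv T^{p^k} + f^{(k)}(0) \imod{p R[T]}.
\end{equation*}
The case $k=0$ is trivial since $f^{(0)}=T$ and $f^{(0)}(0)=0$. For the inductive step, one uses $f^{(k)} = (f^{(k-1)})^p + c$ together with the Frobenius relation $(A+B)^p \equiv A^p+B^p \imod{p R[T]}$: applying the inductive hypothesis inside the $p$-th power and then using that $f^{(k-1)}(0)^p + c = f(f^{(k-1)}(0)) = f^{(k)}(0)$ yields the claim.

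Next I would handle the lemma's statement itself. The base case $k=1$ is immediate: the right-hand side equals $(T+0)^p+c=T^p+c = f^{(1)}(T)$. For $k\ge 2$, write
\begin{equation*}
  f^{(k)}(T) = f^{(k-1)}(T)^p + c,
\end{equation*}
and use the auxiliary claim (with $k-1$ in place of $k$) to write $f^{(k-1)}(T) = T^{p^{k-1}} + f^{(k-1)}(0) + p\, h(T)$ for some $h\in R[T]$. Raising to the $p$-th power and expanding by the binomial theorem, every term except the leading one is divisible by $p^2$, because the binomial coefficients $\binom{p}{i}$ for $1\le i \le p-1$ are divisible by $p$, and the terms carry at least one factor of $p h(T)$. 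Hence
\begin{equation*}
  f^{(k-1)}(T)^p \equiv \left(T^{p^{k-1}}+f^{(k-1)}(0)\right)^p \imod{p^2 R[T]},
\end{equation*}
and adding $c$ gives the lemma.

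There is no real obstacle here: the argument is essentially a two-step bootstrapping (mod $p$ congruence $\Rightarrow$ mod $p^2$ congruence after $p$-th power), which mirrors the technique already used in Lemma~\ref{lem:powersumlemma}. The only small point of care is that one must genuinely know the $\pmod{p}$ congruence for $f^{(k-1)}$, not just for $f^{(k-2)}$, in order for the lifting to produce the stated shape $(T^{p^{k-1}}+f^{(k-1)}(0))^p+c$; this is precisely why I would set up the auxiliary claim as a separate induction before attacking the lemma.
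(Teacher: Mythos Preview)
Your proof is correct and follows essentially the same approach as the paper: both arguments obtain a $\bmod\,p$ description of $f^{(k-1)}$ of the shape $T^{p^{k-1}}+(\text{constant})+p\cdot(\text{something})$ and then apply the lifting $(a+pb)^p\equiv a^p\imod{p^2}$. The only organizational difference is that you isolate the $\bmod\,p$ statement as a separate induction, whereas the paper runs the induction directly on the $\bmod\,p^2$ statement of the lemma and extracts the $\bmod\,p$ shape by expanding the inductive hypothesis $(T^{p^{k-2}}+f^{(k-2)}(0))^p+c$ and then using $f^{(k-1)}(0)=f^{(k-2)}(0)^p+c$.
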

\begin{proof}
  As $f^{(0)}(0)=0$ the claim holds for $k=1$.
  Say $k\ge 2$. By induction on $k$ we have
  $f^{(k-1)} \equiv T^{p^{k-1}}+p g + f^{(k-2)}(0)^p+c \imod
  {p^2R[T]}$ for some $g\in R[T]$. So
  \begin{alignat*}1
    f^{(k)} &\equiv \left(T^{p^{k-1}}+p g + f^{(k-2)}(0)^p+c\right)^p + c \imod
    {p^2R[T]}\\
    &\equiv \left(T^{p^{k-1}}+ f^{(k-2)}(0)^p+c\right)^p +c \imod
    {p^2R[T]}
  \end{alignat*}
  and the lemma follows from $f^{(k-1)}(0) = f^{(k-2)}(0)^p+c$. 
\end{proof}


Let $f\in R[T]$ and $A=\prod_{j=1}^D (X-x_j)\in R[X]$ where
$x_1,\ldots,x_D$ are in a splitting field of $A$. For all integers $k\ge 0$
we set
$$A_k = \prod_{j=1}^D (X-f^{(k)}(x_j)).$$
Then $A_k\in R[X]$ by Lemma~\ref{lem:powerpoly}(i).

\begin{lemma}
  \label{lem:AkAlcong}
  Let $ f= T^p+c\in R[T]$. 
  Suppose that $k,l\in\IN$ satisfy $k\le l$ and $f^{(k-1)}(0) =
  f^{(l-1)}(0)$.
  Set
  \begin{equation}
    \label{def:delta}
    \delta = \left\{
      \begin{array}{ll}
        0 &: \text{if }k=1,\\
        1 &:\text{if }k\ge 2.             
      \end{array}
    \right.
  \end{equation}
  We assume that
  \begin{equation}
    \label{eq:congprops}
    a^{p^{k-\delta-1}}\equiv a^{p^{l-\delta-1}} \imod{p}
    \quad    \text{for all }a\in R.
  \end{equation}
  Then $A_k\equiv A_l \imod{p^2 R[X]}$ for all monic
  $A\in R[X]$. 
\end{lemma}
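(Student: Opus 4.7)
The plan is to sandwich the comparison $A_k \equiv A_l$ through an intermediate pair $A'_k,A'_l \in R[X]$ built from the simpler expressions provided by Lemma~\ref{lem:congpsqrlemma}. Set $\alpha = f^{(k-1)}(0) = f^{(l-1)}(0) \in R$ and, for $m\in\{k,l\}$, define $g_m(T) = (T^{p^{m-1}}+\alpha)^p+c \in R[T]$ and $A'_m = \prod_{j=1}^D (X-g_m(x_j))$. Lemma~\ref{lem:powerpoly}(i) ensures $A'_m \in R[X]$. The result will follow from the two congruences $A_m \equiv A'_m$ and $A'_k \equiv A'_l$ modulo $p^2R[X]$.

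First I would show $A_m \equiv A'_m \imod{p^2 R[X]}$ for $m\in\{k,l\}$. Lemma~\ref{lem:congpsqrlemma} gives $f^{(m)} - g_m \in p^2 R[T]$, so after evaluation at any $x_j$ inside a ring $S \supseteq R$ containing all the roots (e.g.\ $S = R[x_1,\ldots,x_D]$ inside a splitting field of $A$) one has $f^{(m)}(x_j) - g_m(x_j) \in p^2 S$. Expanding $A_m - A'_m$ factor by factor then shows $A_m - A'_m \in p^2 S[X]$. Because $S$ is integral over the integrally closed domain $R$, the intersection $p^2 S \cap R$ collapses to $p^2 R$, hence $p^2 S[X]\cap R[X] = p^2 R[X]$; since both $A_m$ and $A'_m$ already lie in $R[X]$, the congruence descends to $p^2 R[X]$. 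When $k=1$ one even has $\alpha=0$ and $g_1=f$, so this step is trivial for $m=k$.

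Next I would show $A'_k \equiv A'_l \imod{p^2 R[X]}$. If $k=1$ (so $\delta=0$ and $\alpha=0$), the shift $Y=X-c$ rewrites $A'_m$ as $\prod_j (Y-x_j^{p^m})$ for $m\in\{1,l\}$, whose coefficients are $\pm e_i(x_1^{p^m},\ldots,x_D^{p^m})$; the hypothesis reduces to $a \equiv a^{p^{l-1}} \imod{pR}$, and Lemma~\ref{lem:powerpoly}(ii) applied with parameters $(1,l)$ gives the coefficient-wise congruence modulo $p^2R$. If $k\ge 2$ (so $\delta=1$), setting $P(T)=(T+\alpha)^p+c \in R[T]$ one has $g_m(x_j) = P(x_j^{p^{m-1}})$, so each coefficient of $A'_m$ equals $Q_i(e_1(x_1^{p^{m-1}},\ldots,x_D^{p^{m-1}}),\ldots,e_D(x_1^{p^{m-1}},\ldots,x_D^{p^{m-1}}))$ for a single polynomial $Q_i$ with $R$-coefficients (depending on $P$ and $D$ but not on $m$). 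The hypothesis now reads $a^{p^{k-2}} \equiv a^{p^{l-2}} \imod{pR}$, so Lemma~\ref{lem:powerpoly}(ii) applied with parameters $(k-1,l-1)$ supplies $e_i(x_1^{p^{k-1}},\ldots,x_D^{p^{k-1}}) \equiv e_i(x_1^{p^{l-1}},\ldots,x_D^{p^{l-1}}) \imod{p^2R}$ for every $i$, and feeding this into $Q_i$ completes the step.

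The substantive inputs---Lemma~\ref{lem:congpsqrlemma} for a clean form of $f^{(m)}$ and Lemma~\ref{lem:powerpoly}(ii) for the congruence of symmetric functions of $p^m$-th powers---are already in hand. I expect the main obstacle to be purely bookkeeping: aligning the parameter $\delta$ with the correct choice of exponents in Lemma~\ref{lem:powerpoly}(ii) for the two cases $k=1$ and $k\ge 2$, and performing the integrality descent from a congruence in $p^2S[X]$ down to $p^2R[X]$ using that $R$ is integrally closed.
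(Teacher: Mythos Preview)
Your proposal is correct and follows essentially the same route as the paper's proof: both use Lemma~\ref{lem:congpsqrlemma} to replace $f^{(m)}$ by a polynomial of the form $g(T^{p^{m-\delta}})$ modulo $p^2$, then invoke Lemma~\ref{lem:powerpoly}(ii) on the elementary symmetric functions of the $p^{m-\delta}$-th powers, and finally descend the congruence from $S=R[x_1,\ldots,x_D]$ to $R$ using that $R$ is integrally closed. The only cosmetic difference is that you package the argument via an explicit intermediate polynomial $A'_m$ and treat the $k=1$ case with the shift $Y=X-c$, whereas the paper works uniformly at the level of the $e_j$'s via a single polynomial $P_j$ (your $Q_i$); the substance is identical.
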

\begin{proof}
  If $k=1$, then $0=f^{(k-1)}(0)=f^{(l-1)}(0)$. In this case,
  Lemma~\ref{lem:congpsqrlemma}  implies
  $f^{(l)} \equiv g(T^{p^l})\imod {p^2R[T]}$ with $g = T+c$.
  Observe that $f^{(k)} = f = g(T^p) = g(T^{p^k})$.

  If $k\ge 2$, Lemma~\ref{lem:congpsqrlemma} implies
  $f^{(k)}\equiv g(T^{p^{k-1}}) \imod{p^2R[T]}$ and
  $f^{(l)}\equiv g(T^{p^{l-1}}) \imod{p^2R[T]}$ where
  $g = (T+f^{(k-1)}(0))^p+c$.

  We can summarize both cases by $f^{(k)}\equiv g(T^{p^{k-\delta}}) \imod{p^2}$
  and $f^{(l)}\equiv g(T^{p^{l-\delta}}) \imod{p^2}$; we sometimes drop the
  reference to base ring in $\imod {p^2}$ below.
  
  Let $j\in\{0,\ldots,D\}$, the polynomial 
  $e_j(g(X_1),\ldots,g(X_D))$ is symmetric.  So
  \begin{equation*}
    e_j(g(X_1),\ldots,g(X_D))= {P_{j}}(e_1,\ldots,e_D)
  \end{equation*}
  for some ${P_{j}} \in R[X_1,\ldots,X_D]$. 
  Next we replace $X_i$ by $X_i^{p^{k-\delta}}$ to get
  \begin{equation*}
    e_j(g(X_1^{p^{k-\delta}}),\ldots,g(X_D^{p^{k-\delta}}))=     P_{j}(e_1(X_1^{p^{k-\delta}},\ldots,X_D^{p^{k-\delta}}),\ldots,e_D(X_1^{p^{k-\delta}},\ldots,X_D^{p^{k-\delta}})).
  \end{equation*}
  Hence
  \begin{equation}
    \label{eq:sjPjk}
    e_j(f^{(k)}(X_1),\ldots,f^{(k)}(X_1))\equiv
    P_{j}(e_1(X_1^{p^{k-\delta}},\ldots,X_D^{p^{k-\delta}}),\ldots,e_D(X_1^{p^{k-\delta}},\ldots,X_D^{p^{k-\delta}}))\imod{p^2}.
  \end{equation}

  The same argument for $l$ yields
  \begin{equation}
    \label{eq:sjPjl}
    e_j(f^{(l)}(X_1),\ldots,f^{(l)}(X_D)) \equiv
    P_{j}(e_1(X_1^{p^{l-\delta}},\ldots,X_D^{p^{l-\delta}}),\ldots,e_D(X_1^{p^{l-\delta}},\ldots,X_D^{p^{l-\delta}}))
    \imod {p^2}.
  \end{equation}
  
  We factor $A = (X-x_1)\cdots (X-x_D)$ with $x_1,\ldots,x_D$ inside a
  fixed splitting field of $A$. We
  specialize $(x_1,\ldots,x_D)$ in
  (\ref{eq:sjPjk}) to get
  \begin{equation*}
    e_j\left(f^{(k)}(x_1),\ldots,f^{(k)}(x_D)\right)\in  
    P_{j}\left(e_1(x_1^{p^{k-\delta}},\ldots,x_D^{p^{k-\delta}}),\ldots,e_D(x_1^{p^{k-\delta}},\ldots,x_D^{p^{k-\delta}})\right)+p^2R[x_1,\ldots,x_D];
  \end{equation*}
  both elements lie in $R$ by
  Lemma~\ref{lem:powerpoly}(i). As any
  element in $R[x_1,\ldots,x_D]$ is integral over $R$ and since $R$ is
  integrally closed we conclude 
  \begin{equation*}
    e_j(f^{(k)}(x_1),\ldots,f^{(k)}(x_D))
    \equiv
    P_{j}\left(e_1(x_1^{p^{k-\delta}},\ldots,x_D^{p^{k-\delta}}),\ldots,e_D(x_1^{p^{k-\delta}},\ldots,x_D^{p^{k-\delta}})\right)\imod{p^2
    R}.
  \end{equation*}
  The same statement holds with $k$ replaced by $l$ by using (\ref{eq:sjPjl}).

  By hypothesis (\ref{eq:congprops})
  and by Lemma~\ref{lem:powerpoly}(ii) applied to $k-\delta,l-\delta$ we have
  $$ e_j(x_1^{p^{k-\delta}},\ldots,x_D^{p^{k-\delta}}) \equiv
  e_j(x_1^{p^{l-\delta}},\ldots,x_D^{p^{l-\delta}}) \imod {p^2R}$$
  and hence
  $$    e_j(f^{(k)}(x_1),\ldots,f^{(k)}(x_D))
  \equiv     e_j(f^{(l)}(x_1),\ldots,f^{(l)}(x_D))\imod{p^2R}.$$
  This holds for all $j$ and so 
  $A_k \equiv A_l \imod {p^2}$, as desired.
\end{proof}

Let
\begin{equation}
  \label{def:Phip}
  \Phi_p = \sum_{k=0}^\infty {1/p \choose k} U^k \in \IQ[[U]],
\end{equation}
it satisfies $\Phi_p^p = 1+U$. If $k\ge 0$ is an integer, then
\begin{equation*}
  {1/p\choose k} p^k = \frac{1}{k!} \frac 1p\left(\frac 1p
    -1\right)\cdots\left(\frac 1p -k+1\right) p^k \in \frac{1}{k!}\IZ.
\end{equation*}
The exponent of the prime  $p$ in $k!$ equals the finite sum
$\sum_{e=1}^{\infty}[k/p^e]$ which is at most
$k\sum_{e=1}^\infty p^{-e}= k/(p-1)\le k$. 
So $p$ does not appear in the
denominator of 
${1/p\choose k} p^{2k}\in\IQ$ for all $k\ge 0$.
Moreover, if $\ell$ is a prime distinct from $p$, then $\ell$ does not
appear in the denominator of ${1/p\choose k}$. 
Therefore, 
\begin{equation}
  \label{eq:psqrintegral}
  \Phi_p(p^2U) \in \IZ[[U]]. 
\end{equation}

Let $R[[1/X]]$ be the ring of formal  power series in $X$.
We write $\mathrm{ord}_\infty$ for the natural valuation on
$R[[1/X]]$, \textit{i.e.}, $\mathrm{ord}_\infty(1/X) = 1$. A
non-constant polynomial in $R[X]$ does not lie in $R[[1/X]]$, rather
it lies in the ring of formal Laurent series $R((1/X))$ to which we
extend $\mathrm{ord}_\infty$ in the unique fashion. So we will
consider $R[X]\subset R((1/X))$. For example, any monic polynomial in
$R[X]$ is a unit in $R((1/X))$.

\begin{prop}
  \label{prop:powerseries}
  Let $p$ be a prime number and let $f=T^p+c\in R[T]$.
  Suppose that $k,l\in\IN$ satisfy $k\le l$ and $f^{(k-1)}(0) =
  f^{(l-1)}(0)$.
  Let $\delta\in \{0,1\}$ be as in Lemma~\ref{lem:AkAlcong} and suppose
  $a^{p^{k-\delta-1}}\equiv
  a^{p^{l-\delta-1}}\imod{p}$ for all $a\in R$. If $A\in R[T]$ is monic there
  exists $B\in R[X]$ such that
  $\deg B \le \deg A_k^p - 1$, 
  $\Phi_p(p^2 B/A_k^p) \in R[[1/X]]$, and $\Phi_p(p^2 B/A_k^p)^p
  =(A_l/A_k)^{p-1}$.
\end{prop}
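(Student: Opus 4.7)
The plan is to reverse-engineer $B$ from the desired identity and then verify the three required properties. Since $\Phi_p^p = 1 + U$, the equation $\Phi_p(p^2 B/A_k^p)^p = (A_l/A_k)^{p-1}$ forces
\[
1 + \frac{p^2 B}{A_k^p} = \frac{A_l^{p-1}}{A_k^{p-1}},
\]
so it is natural to set $B := A_k(A_l^{p-1} - A_k^{p-1})/p^2$. The task then reduces to showing that this candidate lies in $R[X]$, has degree at most $pD - 1$ where $D = \deg A$, and that the formal substitution into $\Phi_p$ produces an element of $R[[1/X]]$.

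For the integrality of $B$, I will invoke Lemma~\ref{lem:AkAlcong}: the hypotheses on $k, l, \delta$ and the Frobenius-type congruence (\ref{eq:congprops}) are exactly those needed to conclude $A_l - A_k \in p^2 R[X]$. The telescoping factorization
\[
A_l^{p-1} - A_k^{p-1} = (A_l - A_k)\sum_{i=0}^{p-2} A_l^{p-2-i} A_k^i
\]
then places $A_l^{p-1} - A_k^{p-1}$, and hence $A_k(A_l^{p-1} - A_k^{p-1})$, in $p^2 R[X]$. Since $p$ is not a zero divisor in the characteristic $0$ ring $R[X]$, division by $p^2$ leaves $B \in R[X]$. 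For the degree bound, $A_k^{p-1}$ and $A_l^{p-1}$ are both monic of degree $(p-1)D$, so the leading $X^{(p-1)D}$ terms cancel in the difference; hence $\deg B \le D + ((p-1)D - 1) = pD - 1 = \deg A_k^p - 1$, as required.

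Finally I will check that $\Phi_p(p^2 B/A_k^p) \in R[[1/X]]$. Since $A_k^p$ is monic of degree $pD$ and $\deg B < pD$, the rational function $W := B/A_k^p$ lies in $(1/X) R[[1/X]]$, with $\mathrm{ord}_\infty(W) \ge 1$. Expanding termwise,
\[
\Phi_p(p^2 W) = \sum_{n \ge 0} \binom{1/p}{n} p^{2n} W^n,
\]
the coefficients $\binom{1/p}{n} p^{2n}$ are integers by (\ref{eq:psqrintegral}), and $\mathrm{ord}_\infty(W^n) \ge n$ guarantees convergence in $R[[1/X]]$. Raising to the $p$-th power yields $1 + p^2 W$, which by our choice of $B$ equals $(A_l/A_k)^{p-1}$. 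The only delicate input is the $p^2$-congruence $A_l \equiv A_k \imod{p^2 R[X]}$ supplied by Lemma~\ref{lem:AkAlcong}; everything else is bookkeeping with formal Laurent series.
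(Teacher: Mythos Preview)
Your proof is correct and essentially identical to the paper's. The paper phrases the definition of $B$ via the equation $A_k A_l^{p-1} = A_k^p + p^2 B$, which is algebraically the same as your $B = A_k(A_l^{p-1} - A_k^{p-1})/p^2$; your telescoping factorization makes explicit the step (left implicit in the paper) that $A_l \equiv A_k \imod{p^2}$ forces $A_l^{p-1} \equiv A_k^{p-1} \imod{p^2}$, and the remaining verifications of the degree bound and the membership in $R[[1/X]]$ match the paper's argument line for line.
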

\begin{proof}
  Lemma~\ref{lem:AkAlcong} implies $A_k A_l^{p-1} = A_k^p + p^2 B$ for
  some $ B\in R[X]$. As $A_k$ and $A_l$ are both monic of degree $D$,
  leading terms cancel and we find $\deg B \le pD-1$. Then,
  \begin{equation*}
    A_k A_l^{p-1} = \left(1+p^2C\right)A_k^p.
  \end{equation*}
  with $C={B}/{A_k^p}\in R((1/X))$. So $\mathrm{ord}_\infty(C) =-\deg B + p D \ge 1$. Therefore, $C$ 
  is a formal power series in $1/X$ and even lies in the ideal $(1/X)R[[1/X]]$.

  Recall that $\Phi_p(p^2 U) \in \IZ[[U]]$ by (\ref{eq:psqrintegral}).
  Therefore, $\Phi_p(p^2C ) \in R[[1/X]]$ and
  \begin{equation*}
    A_kA_l^{p-1}=\left(\Phi_p(p^2C) A_k\right)^p
  \end{equation*}
  by the functional equation $\Phi_p^p=1+U$; this is an identity in
  $R((1/X))$. We conclude the proof by dividing by $A_k^{p-1}$.
\end{proof}  


\section{Constructing Simply Connected Domains}
\label{sec:construction}
  
We begin with a topological lemma which is most likely
well-known. We provide a proof as we could not find a proper reference.

Below $\ecp$ denotes the extended complex plane $\IC\cup\{\infty\}$, 
\textit{i.e.}, the Riemann sphere.

\begin{lemma}
  \label{lem:topology}
  Let $f\in\IC[T]\ssm\IC$. Let $B\subset \IC$ be compact such that
  $\IC\ssm B$ is connected with cyclic fundamental group. If $B$
  contains all critical values of $f$, then $\IC\ssm f^{-1}(B)$ is
  connected and $\ecp\ssm f^{-1}(B)$ is a simply connected.
\end{lemma}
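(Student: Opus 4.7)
My plan is to view $f\colon \ecp \to \ecp$ as a proper branched cover of degree $d = \deg f$ and combine a monodromy argument at $\infty$ with Riemann--Hurwitz. Set $V = \ecp \ssm B$. Using that $\pi_1$ of a connected proper open subset of $\ecp$ is free of rank $k - 1$, where $k$ is the number of components of the complement in $\ecp$, the cyclic hypothesis applied to $\IC \ssm B$ (whose complement in $\ecp$ is $B \cup \{\infty\}$) forces $B \cup \{\infty\}$ to have at most two components; as $\{\infty\}$ is isolated there (since $B$ is bounded), $B$ itself has at most one component. Assuming $B \neq \emptyset$ (the empty case being trivial), $B$ is connected, so $V$ is open and connected with connected complement, hence simply connected. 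Since $B$ contains all finite critical values of $f$, the restriction $f\colon f^{-1}(V) \to V$ is a proper branched cover of degree $d$ whose only critical value is $\infty \in V$; I assume $d \ge 2$ (the case $d=1$ is trivial), so the unique preimage $\infty \in f^{-1}(V)$ has ramification index $d$.

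For the first assertion, let $\gamma$ be a small loop around $\infty$ in $V$, generating $\pi_1(V \ssm \{\infty\}) = \pi_1(\IC \ssm B) \cong \IZ$. Locally at $\infty$ the map $f$ is conjugate to $w \mapsto w^d$, so $f^{-1}(\gamma)$ is a single Jordan curve mapped $d$-to-$1$ onto $\gamma$. The monodromy of $\gamma$ is therefore a $d$-cycle, hence transitive on the fiber, and the unramified cover $f\colon \IC \ssm f^{-1}(B) \to \IC \ssm B$ is connected.

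For the second assertion, $\infty$ is a limit point of the connected set $\IC \ssm f^{-1}(B)$ (unbounded because $f^{-1}(B)$ is compact), so $f^{-1}(V) = \ecp \ssm f^{-1}(B)$ is connected. Applying the Euler characteristic form of Riemann--Hurwitz to $f\colon f^{-1}(V) \to V$, using $\chi(V) = 1$ and the only ramification at $\infty$ of index $d$, gives
\[
  \chi(f^{-1}(V)) = d \cdot \chi(V) - (d-1) = 1.
\]
The rank formula from the first paragraph translates to $\chi(U) = 2 - k$ for a connected proper open $U \subset \ecp$; applied to $f^{-1}(V)$, this forces $k = 1$, so $f^{-1}(B)$ is connected and $f^{-1}(V)$ is simply connected. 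The main obstacle I anticipate is the local claim that $f^{-1}(\gamma)$ is a single loop; this follows from $f$ having a pole of order exactly $d$ at $\infty$, so in the coordinates $1/z$ at source and $1/w$ at target it becomes holomorphic with a zero of order $d$ at the origin.
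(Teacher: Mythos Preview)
Your argument is correct and takes a genuinely different route from the paper's. The paper never reduces to the case where $B$ is connected: for connectedness of $\IC\ssm f^{-1}(B)$ it observes that the restriction $f\colon \IC\ssm f^{-1}(B)\to\IC\ssm B$ is a proper local homeomorphism, hence closed and open, so each component of the source surjects onto the unbounded target and is therefore itself unbounded; but $f^{-1}(B)$ is compact, so there is only one unbounded component. For simple connectedness the paper notes that $\pi_1(\IC\ssm f^{-1}(B))$ embeds as a finite-index subgroup of the cyclic group $\pi_1(\IC\ssm B)$ and is therefore cyclic, and then applies Seifert--van~Kampen to the cover of $\ecp\ssm f^{-1}(B)$ by $\IC\ssm f^{-1}(B)$ together with a punctured-disk neighbourhood of $\infty$ to see that adjoining $\infty$ kills the generating loop.

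Your approach trades these bare-hands arguments for more structural tools: the rank formula for planar domains (equivalently Alexander duality) to force $B$ connected, monodromy at $\infty$ for connectedness of the cover, and Riemann--Hurwitz for simple connectedness. This buys you the extra conclusion that both $B$ and $f^{-1}(B)$ are themselves connected, which the paper's proof does not extract. One point worth making explicit: invoking Riemann--Hurwitz for the open map $f\colon f^{-1}(V)\to V$ presupposes that $f^{-1}(V)$ has finite topological type. This does follow, since you have already shown $\IC\ssm f^{-1}(B)$ is a connected degree-$d$ cover of a space with $\pi_1\cong\IZ$, hence has $\pi_1$ isomorphic to the index-$d$ subgroup $d\IZ\cong\IZ$; but once you say that, you could also bypass Riemann--Hurwitz entirely and go straight to the rank formula, or simply note that a connected proper degree-$d$ branched cover of a disk, totally ramified over a single interior point, is itself a disk.
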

\begin{proof}
  Let us denote the restriction $f|_{f^{-1}(\IC\ssm B)} \colon
  f^{-1}(\IC\ssm B)=\IC \ssm f^{-1}(B)\rightarrow \IC\ssm B$ by $\psi$.
  Then $\psi$ is a surjective continuous map. As a non-constant polynomial
  induces a proper map we see that $\psi$ is proper and thus closed. By hypothesis,
  $f^{-1}(\IC\ssm B)$ does not contain any critical point of $f$. So $\psi$
  is a local homeomorphism and thus open. As $\psi$ is closed and open, the image
  of a connected component of $\IC\ssm f^{-1}(B)$ under $\psi$ is all of
  $\IC\ssm B$. As $\psi$ is the restriction of a polynomial, we conclude
  that all connected components of $\IC\ssm f^{-1}(B)$ are unbounded. 
  The preimage $f^{-1}(B)$ is compact and so $\IC \ssm f^{-1}(B)$ has
  a only one unbounded connected component. Therefore, 
  $\IC\ssm f^{-1}(B)$ is connected and so is $\ecp\ssm f^{-1}(B)$.

  We fix a base point $z_0\in \IC\ssm f^{-1}(B)$.
  The proper, surjective, local homeomorphism $\psi$ is
  a topological covering by
  Lemma~2~\cite{Ho:propermaps}.
  Then $\pi_1(\IC \ssm f^{-1}(B),z_0)$ is isomorphic to a
  subgroup of $\pi_1(\IC \ssm B,p(z_0))$ of finite index and thus
  cyclic. 

  Let $U_\infty$ be the complement in $\IC$ of a closed disk centered
  at $0$
  containing the compact set $f^{-1}(B)$. We suppose $z_0\in
  U_\infty$. Adding the point at infinity gives us a simply connected
  domain $U_\infty\cup\{\infty\}\subset \ecp$ and we have $\ecp \ssm
  f^{-1}(B) = (U_\infty \cup\{\infty\})\cup (\IC\ssm f^{-1}(B))$. The
  Theorem of Seifert and van Kampen tells us that the inclusion induces
  a short exact sequence
  \begin{equation}
    \label{eq:shortexactseq}
    1\rightarrow \pi_1(U_\infty,z_0)\rightarrow \pi_1(\IC\ssm
    f^{-1}(B),z_0) \rightarrow \pi_1(\ecp\ssm
    f^{-1}(B),z_0)\rightarrow 1.
  \end{equation}

  We may assume $B\not=\emptyset$.   Consider the loop $t\mapsto z_0 e^{2\pi i t}$, its image lies in
  $U_\infty$. 
  Fix any $w\in f^{-1}(B)$. It is well-known that such loop
  represents a generator of $\pi_1(\IC \ssm \{w\},z_0)\cong\IZ$ and
  that 
  the natural homomorphism $\pi_1(U_\infty,z_0)\rightarrow
  \pi_1(\IC\ssm f^{-1}(B),z_0) \rightarrow \pi_1(\IC\ssm\{w\},z_0)$ is
  an isomorphism. So also the middle group is isomorphic to $\IZ$ and the
  first arrow is a group isomorphism. 
  Therefore, $\ecp\ssm f^{-1}(B)$ is simply connected by
  (\ref{eq:shortexactseq}). 
\end{proof}

We recall the definition of the transfinite diameter of a
non-empty compact subset $\cK$ of $\IC$; see Chapter
5.5~\cite{Ransford} for  details. For an integer $n\ge 2$ we
define
\begin{equation*}
  d_n(\cK) = \sup \left\{ \prod_{1\le i<j\le n} |z_i-z_j|^{2/(n(n-1))} :
    z_1,\ldots,z_n\in\cK\right\}. 
\end{equation*}
The sequence $\bigl(d_n(\cK)\bigr)_{n\ge 2}$ is non-increasing and the
transfinite diameter of $\cK$ is its limit. The transfinite diameter
is known to equal the capacity $\mathrm{cap}(\cK)$ of $\cK$, by the
Fekete--Szeg\"o Theorem, see Theorem~5.5.2~\cite{Ransford}.
It is convenient to define $\mathrm{cap}(\emptyset)=0$.

For $f\in\IC[T]$ and for $n\in\IN$ we introduce the truncated
post-critical set
\begin{equation*}
  \pco^+_n(f) = \bigl\{f^{(m)}(x) : m\in\{1,\ldots,n\},x\in\IC, \text{ and
  }f'(x)=0\bigr\}
  \subset \pco^+(f).
\end{equation*}
In the next proposition we will use Dubinin's Theorem. 

\begin{prop}
  \label{prop:constructK2}
  Let $f\in\IC[T]$ be monic of degree $d\ge 2$ such that $f'(0)=0$.
  Let $n\in\IN$ such that 
  there exist $q\ge 0$ and   $\gamma_1,\ldots,\gamma_q\in\IC^\times$
  with 
  $\pco_{n}^+(f)\subset\hh(\gamma_1,\ldots,\gamma_q)$.
  Let $m\ge 1$ and let $\alpha_1,\ldots,\alpha_m \in\IC$.
  There is a simply connected domain $U\subset\ecp$
  with the following properties.
  \begin{enumerate}
  \item[(i)] We have   $0,\alpha_1,\ldots,\alpha_m\not\in U$ and
    $\ecp\ssm U$ is a compact subset of $\IC$.
  \item[(ii)] We have
    \begin{equation*}
      \mathrm{cap}(\ecp\ssm U)\le
      4^{-1/(qd^n+m)}
      \max\{|\gamma_1|,\ldots,|\gamma_q|,|f^{(n)}(\alpha_1)|,\ldots,|f^{(n)}(\alpha_m)|\}^{1/d^n}.
    \end{equation*}
  \end{enumerate}
\end{prop}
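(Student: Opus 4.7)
My plan is to set $I_0=\hh(\gamma_1,\ldots,\gamma_q)$ and $T_0=(f^{(n)})^{-1}(I_0)$, then attach $m$ Jordan arcs running out to the $\alpha_i$. Since $f^{(n)}$ is monic of degree $d^n$, its critical values lie in $\pco^+_n(f)\subset I_0$, and since the hedgehog $I_0$ is a compact topological tree with $\IC\ssm I_0$ connected and of cyclic fundamental group, Lemma~\ref{lem:topology} applied to $f^{(n)}$ and $B=I_0$ makes $\ecp\ssm T_0$ simply connected. Moreover $0\in T_0$, since $f'(0)=0$ forces $f^{(n)}(0)\in\pco^+_n(f)\subset I_0$.

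Next I fix Riemann maps $\Phi\colon\{|w|>1\}\to\ecp\ssm T_0$ and $\Psi\colon\{|v|>1\}\to\ecp\ssm I_0$ fixing $\infty$, normalized so that the B\"ottcher-type relation $f^{(n)}\circ\Phi(w)=\Psi(w^{d^n})$ holds. Matching leading coefficients at $\infty$ yields the identity $\mathrm{cap}(T_0)=\mathrm{cap}(I_0)^{1/d^n}$. For each $\alpha_i\not\in T_0$ set $w_i=\Phi^{-1}(\alpha_i)$ and $\beta_i=f^{(n)}(\alpha_i)$: then $w_i^{d^n}=\Psi^{-1}(\beta_i)$, and hence $|w_i|=|\Psi^{-1}(\beta_i)|^{1/d^n}$.

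Let $A_i=\Phi([w_i/|w_i|,w_i])$ when $\alpha_i\not\in T_0$ and $A_i=\emptyset$ otherwise. Each $A_i$ is a Jordan arc from an accessible point of $\partial T_0$ out to $\alpha_i$, and (after merging any $A_i$ and $A_j$ with $w_i/|w_i|=w_j/|w_j|$) the $A_i$ are pairwise disjoint as $\Phi$-images of disjoint radial segments in the $w$-plane. Put $U=\ecp\ssm(T_0\cup\bigcup_{i=1}^m A_i)$: this is a simply connected domain in $\ecp$ with compact complement in $\IC$, and it avoids $0$ and every $\alpha_i$. With $K'=\{|w|\le 1\}\cup\bigcup_{i=1}^m[w_i/|w_i|,w_i]$ in the $w$-plane, the composition of $\Phi$ with the Riemann map of $\ecp\ssm K'$ uniformizes $\ecp\ssm U$, and comparing leading coefficients at $\infty$ gives
\[
  \mathrm{cap}(\ecp\ssm U)=\mathrm{cap}(T_0)\cdot\mathrm{cap}(K')=\mathrm{cap}(I_0)^{1/d^n}\cdot\mathrm{cap}(K').
\]

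The main obstacle is estimating $\mathrm{cap}(K')$ sharply enough to achieve the Dubinin exponent $qd^n+m$, rather than the weaker $q+m$ that a direct application to the $\beta$-plane hedgehog $\hh(\gamma_1,\ldots,\gamma_q,\beta_1,\ldots,\beta_m)$ would give. My approach is to compare $K'$ with the auxiliary $w$-plane hedgehog $H_w=\hh(w_1,\ldots,w_m,\xi_{1,1},\ldots,\xi_{q,d^n})$, where the $\xi_{j,k}$ run over the $d^n$ preimages of $\Psi^{-1}(\gamma_j)$ under $w\mapsto w^{d^n}$; since $|\Psi^{-1}(\gamma_j)|=1$, all $\xi_{j,k}$ lie on the unit circle. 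Dubinin's theorem yields $\mathrm{cap}(H_w)\le 4^{-1/(qd^n+m)}\max\{|w_i|,1\}$, and the relation $|w_i|^{d^n}=|\Psi^{-1}(\beta_i)|$ combined with the asymptotics $\Psi(v)=\mathrm{cap}(I_0)v+O(1)$ converts the right-hand side into a quantity bounded by $\max\{|\gamma_j|,|\beta_i|\}^{1/d^n}/\mathrm{cap}(I_0)^{1/d^n}$, which exactly absorbs the $\mathrm{cap}(T_0)$ prefactor. The delicate point, and the main novelty, is transferring the Dubinin bound from $H_w$ to $\mathrm{cap}(K')$ itself---the inclusion $H_w\subset K'$ gives only the opposite inequality---by showing that the closed disk $\{|w|\le 1\}$ inside $K'$ can effectively be replaced by the $qd^n$ unit-circle quills of $H_w$; this is precisely where the B\"ottcher-compatible normalization of $\Phi$ coupled with the self-cover $w\mapsto w^{d^n}$ of the exterior disk plays the essential role.
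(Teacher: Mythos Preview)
Your proposal has a genuine gap at exactly the point you flag as ``the delicate point, and the main novelty.'' You reduce the problem to bounding $\mathrm{cap}(K')$, where $K'$ is the closed unit disk with $m$ radial spikes attached, and then propose to compare with the hedgehog $H_w$ obtained by replacing the disk with $qd^n$ unit-length quills. But as you yourself note, the inclusion $H_w\subset K'$ yields $\mathrm{cap}(H_w)\le\mathrm{cap}(K')$, which is the wrong direction; no argument is given for the reverse, and the vague appeal to the ``B\"ottcher-compatible normalization coupled with $w\mapsto w^{d^n}$'' is not a proof. Your spikes in the $w$-plane are not equivariant under rotation by $d^n$-th roots of unity, so the self-cover $w\mapsto w^{d^n}$ does not respect $K'$, and there is no evident mechanism by which the disk can be ``effectively replaced'' by $qd^n$ unit quills.

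The paper sidesteps this obstacle entirely by \emph{not} uniformizing $\ecp\ssm T_0$ to the exterior of a disk. Instead, it constructs a holomorphic branch $g_n$ of $z\mapsto f^{(n)}(z)^{1/d^n}$ on $\IC\ssm T_0$ and shows that $g_n$ is a biholomorphism, with $g_n(z)=z+O(1)$ at infinity, onto $\IC\ssm I_n$, where $I_n=\{z:z^{d^n}\in I_0\}$ is itself a \emph{hedgehog} with at most $qd^n$ quills of lengths $|\gamma_j|^{1/d^n}$. One then adjoins, in this target plane, the $m$ straight quills $[0,g_n(\alpha_i)]$ for those $\alpha_i\notin T_0$, obtaining a hedgehog $I'_n$ with at most $qd^n+m$ quills; the domain $U$ is $h_n(\ecp\ssm I'_n)$ with $h_n=g_n^{-1}$. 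Since $h_n(z)=z+O(1)$, a standard conformal transplantation bound (Theorem~5.2.3 of Ransford) gives $\mathrm{cap}(\ecp\ssm U)\le\mathrm{cap}(I'_n)$, and Dubinin applies directly to $I'_n$. The whole point is that taking the $d^n$-th root of a hedgehog rooted at $0$ yields another hedgehog, so one stays in the class where Dubinin is available; your Riemann map to the exterior disk destroys precisely this structure.
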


To prepare for the proof of Proposition~\ref{prop:constructK2} let $f\in\IC[T]$
denote a monic polynomial of degree $d\ge 2$ with $f'(0)=0$. Let
$n\in\IN$ and say $\gamma_1,\ldots,\gamma_q\in\IC^\times$ such that
$I_0=\hh(\gamma_1,\ldots,\gamma_q)$ contains $\pco_{n}^+(f)$.

We apply Lemma~\ref{lem:topology} to $f^{(n)}$ and $B=I_0$; the
fundamental group of $\IC\ssm I_0$ is cyclic and generated by a large
enough loop. We set
$$H_f^{(n)} = (f^{(n)})^{-1}(I_0).$$
By Lemma~\ref{lem:topology} the complement $\IC\ssm H_f^{(n)}$ is
connected and $\ecp \ssm H_f^{(n)}$ is simply connected. Observe that
$0\in H_f^{(n)}$ because $0$ is a critical point of $f$.

We will show that a branch of $z\mapsto f^{(n)}(z)^{1/d^n}$
admits holomorphic continuation on $\IC\ssm H_f^{(n)}$.
Moreover, this continuation will be a biholomorphic map between
$\IC\ssm H_f^{(n)}$ and the complement of a hedgehog. 

If $z\in \IC$ with $f^{(n)}(z)=0$, then $z\in H_f^{(n)}$ as $0\in
I_0$. Therefore, $f^{(n)}$ has no roots in $\IC\ssm H_f^{(n)}$. The
association $w\mapsto f^{(n)}(w^{-1})^{-1}$ is holomorphic on $\IC
\ssm \tilde H$ with $\tilde H = \{w \in \IC^\times : w^{-1} \in
H_f^{(n)}\}$ closed; the singularity at $0\in \IC\ssm \tilde H$ is
removable and is a zero of order $d^n$. It follows that $\tilde f
\colon w\mapsto w^{-d^n} f^{(n)}(w^{-1})^{-1}$ is holomorphic on $\IC
\ssm \tilde H$ without zeros. Moreover, $\tilde f(0)=1$ as $f$ is
monic.

The complements $\ecp \ssm H_f^{(n)}$ and $\IC \ssm \tilde H$ are
homeomorphic via the M\"obius transformation $z\mapsto z^{-1}$.
In particular, the latter is simply
connected. So
there exists a holomorphic function $\tilde g_n \colon
\IC\ssm \tilde H \rightarrow \IC$ with $\tilde g_n(w)^{d^n}=\tilde
f(w)$ for all $w\in\IC\ssm\tilde H$ and with $\tilde g_n(0)=1$. We
resubstitute $z=1/w$ and define $g_n(z) = z\tilde g_n(z^{-1})^{-1}$ to
find
\begin{equation}
  \label{eq:funceq}
  g_n(z)^{d^n} = f^{(n)}(z)
\end{equation}
for all $z\in \IC \ssm H_f^{(n)}$. Then $g_n$ is holomorphic and
$g_n(z) = z + O(1)$ as $z\rightarrow\infty$.

Let $z\in \IC \ssm H_f^{(n)}$. 
Then $g_n(z) \in\IC\ssm I_n$ with  
$I_n = \{z\in \IC : z^{d^n} \in I_0\}$ by (\ref{eq:funceq}).
Now 
\begin{equation}
  \label{def:In}
  I_n=\hh\bigl(e^{2\pi \sqrt{-1}j/d^n}w_i\bigr)_{\substack{1\le i\le q \\ 1\le j \le
      d^n}}
\end{equation}
is a hedgehog with at most $q^dn$ quills
where the $w_i$ satisfy $w_i^{d^n}=\gamma_i$. 


\begin{lemma}
  The holomorphic function $g_n$ induces a bijection
  $\IC\ssm H_f^{(n)}\rightarrow \IC\ssm I_n$.
\end{lemma}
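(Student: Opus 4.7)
The plan is to extend $g_n$ to a proper holomorphic map $\ecp \ssm H_f^{(n)} \to \ecp \ssm I_n$ of connected Riemann surfaces whose local degree at $\infty$ is $1$, forcing the global degree to be $1$ and hence the map to be a bijection. First I would check the containment of the image. For $z \in \IC \ssm H_f^{(n)}$ we have $f^{(n)}(z) \notin I_0$ by the definition of $H_f^{(n)}$; by the definition~(\ref{def:In}) of $I_n$, any $w \in I_n$ satisfies $w^{d^n} \in I_0$, so the functional equation~(\ref{eq:funceq}) rules out $g_n(z) \in I_n$. Next I would extend $g_n$ across $\infty$ by declaring $g_n(\infty) = \infty$. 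In the local coordinate $w = 1/z$ near $\infty$, the asymptotic $g_n(z) = z + O(1)$ yields $1/g_n(1/w) = w + O(w^2)$, so the extended map is holomorphic at $\infty$ and its local degree there equals $1$.

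The main technical step is establishing properness of $g_n : \ecp \ssm H_f^{(n)} \to \ecp \ssm I_n$. Fix a compact $K \subset \ecp \ssm I_n$ and take any sequence $(z_k)$ in $g_n^{-1}(K)$. After extracting a subsequence we may assume $z_k \to z_\infty$ in $\ecp$ and $g_n(z_k) \to w_\infty \in K$. If $z_\infty$ were in $H_f^{(n)}$, then continuity of $f^{(n)}$ combined with~(\ref{eq:funceq}) would give $w_\infty^{d^n} = f^{(n)}(z_\infty) \in I_0$, so $w_\infty \in I_n$, contradicting $K \cap I_n = \emptyset$. Hence $z_\infty \in \ecp \ssm H_f^{(n)}$, so $g_n^{-1}(K)$ is compact.

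To conclude, both $\ecp \ssm H_f^{(n)}$ and $\ecp \ssm I_n$ are connected Riemann surfaces: the former by Lemma~\ref{lem:topology} applied to $f^{(n)}$ and $I_0$, and the latter because the hedgehog $I_n$ is a compact path-connected tree in $\IC$ whose complement is connected. A proper nonconstant holomorphic map between connected Riemann surfaces has a well-defined degree equal to the sum of local multiplicities along any fiber; since $g_n$ has local degree $1$ at $\infty$, its global degree equals $1$, and $g_n$ is a bijection. The only real obstacle I foresee is the properness check, which reduces to the short functional-equation argument above; everything else is routine bookkeeping.
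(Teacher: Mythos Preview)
Your proof is correct but proceeds differently from the paper.  The paper works entirely on $\IC\ssm H_f^{(n)}$ and $\IC\ssm I_n$: it first shows that $g_n$ has no critical points there (any critical point of $g_n$ would, via~(\ref{eq:funceq}), be a critical point of $f^{(n)}$ and hence lie in $H_f^{(n)}$), so $g_n$ is a local homeomorphism; it then proves properness by the same sequential argument you use, concludes that $g_n$ is a topological covering, and finally obtains injectivity by counting fibers---both $z\mapsto z^{d^n}$ and $f^{(n)}$ have exactly $d^n$ preimages over every point of $\IC\ssm I_0$, which together with~(\ref{eq:funceq}) forces $g_n$ to be one-to-one.

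Your route bypasses both the critical-point check and the fiber-counting step: by extending to $\ecp$ and invoking the degree theory for proper holomorphic maps between connected Riemann surfaces, the single observation that the local degree at $\infty$ equals $1$ (from $g_n(z)=z+O(1)$) immediately gives global degree $1$.  This is cleaner and more conceptual, at the cost of invoking a slightly more abstract fact; the paper's argument is more elementary and self-contained but requires the extra detour through the absence of critical points and the explicit $d^n$-to-$d^n$ cardinality match.
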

\begin{proof}
  Suppose $z\in\IC\ssm H_f^{(n)}$ is a critical point of $g_n$, then
  deriving (\ref{eq:funceq}) gives $d^n g_n(z)^{d^n-1}g_n'(z) =
  ({f^{(n)}})'(z)$ and hence $(f^{(n)})'(z)=0$. The chain rule yields
  $f'(f^{(n-1)}(z))\cdots f'(z)=0$, so $f'$ vanishes at $f^{(j)}(z)$ for
  some $j\in \{0,\ldots,n-1\}$. Hence $f^{(n)}(z) \in \pco_{n}^+(f)$.
  But $\pco_n^+(f) \subset I_0$ by our standing hypothesis and so $z\in
  H_f^{(n)}$. This is a contradiction. We conclude that $g_n$ has no
  critical points on its domain $\IC\ssm H_f^{(n)}$ and that it is a
  local homeomorphism.
  
  Let us verify that the map
  $\IC\ssm H_f^{(n)}\rightarrow \IC\ssm I_n$ is proper. Indeed, let
  $\cK$ be compact with $\cK\subset \IC\ssm I_n$. We need to verify that
  $g_n^{-1}(\cK)$ is compact. To this end let $(x_k)_{k\in\IN}$ be a
  sequence in $g_n^{-1}(\cK)$.
  If the sequence is unbounded, then so is $(f^{(n)}(x_k))_{k\in\IN}$.
  This is impossible as $f^{(n)}(x_k) = g_n(x_k)^{d^n}$ lies in the
  image of $\cK$ in $\IC$ under the $d^n$-th power map, itself a bounded
  set.
  So we may replace $(x_k)_{k\in\IN}$ by a convergent subsequence
  with limit $x\in\IC$.
  To verify our claim it suffices to check $x\in g_n^{-1}(\cK)$.
  If $x\in \IC\ssm H_f^{(n)}$, then
  $g_n(x)=\lim_{k\rightarrow \infty} g_n(x_k) \in \cK$ and we are
  done.
  Let us assume $x\in H_f^{(n)}$. 
  As $\cK$ is compact we
  may pass to a subsequence for which $(g_n(x_k))_{k\in\IN}$
  converges to $y\in \cK$. 
  Now $y^{d^n}=\lim_{k\rightarrow\infty} f^{(n)}(x_k) = f^{(n)}(x)\in
  I_0$ using that $f^{(n)}$ is a polynomial.
  Hence $y\in I_n$, but this contradicts $\cK\subset \IC\ssm I_n$.
 
  The continuous map $\IC\ssm H_f^{(n)}\rightarrow \IC\ssm I_n$ is a
  proper local homeomorphism. So it is open and closed, hence surjective
  as both $\IC\ssm H_f^{(n)}$ and $\IC\ssm I_n$ are connected. We find
  that $\IC\ssm H_f^{(n)}\rightarrow \IC\ssm I_n$ is a topological
  covering using again Lemma~2~\cite{Ho:propermaps}.

  Recall that $I_0$ contains all critical values of $z\mapsto z^{d^n}$ and
  $z\mapsto f^{(n)}(z)$. The fiber of either map above a point of
  $\IC\ssm I_0$ contains $d^n = \deg f^{(n)}$ elements.
  So (\ref{eq:funceq}) implies that $g_n$ is injective.
\end{proof}

The inverse of a holomorphic bijection is again holomorphic. So the inverse
\begin{equation*}
  h_n \colon \IC\ssm I_n\rightarrow \IC\ssm H_f^{(n)}
\end{equation*}
of $g_n$ is also holomorphic;
the  function $h_n$ is a branch of $z\mapsto (f^{(n)})^{-1}(z^{d^n})$.
We have
\begin{equation*}
  h_n(z) = z + O(1)\text{ for }
  z\rightarrow\infty
\end{equation*}
by the same property of $g_n$.  
Thus $h_n$ extends to a homeomorphism
  $\ecp\ssm I_n\rightarrow \ecp\ssm H_f^{(n)}$.

\begin{proof}[Proof of Proposition~\ref{prop:constructK2}]
  If $\alpha_i\in
  \IC\ssm H_f^{(n)}$, then the value $g_n(\alpha_i)$ is well-defined.
  We define a further hedgehog
  \begin{equation}
    \label{def:Iprimen}
    I'_n = I_n \cup \hh\bigl( g_n(\alpha_i)\bigr)_{\substack{1\le i\le
        m \\ \alpha_i\not\in
      H_f^{(n)}}}
  \end{equation}
  with at most $qd^n+m$ quills, see (\ref{def:In}). 
  
  The hedgehog complement $\ecp\ssm I'_n$
  is connected and
  therefore so is
  $$U = h_n(\ecp\ssm I'_n).$$
  The image $U$ lies open in $\ecp\ssm H_f^{(n)}$ and thus in $\ecp$.
  Because $\infty\in U$ we find that $\ecp\ssm U=\IC\ssm U$ is
  compact; this yields part of the claim of (i). 
  The complement of $U$ is obtained by augmenting $H_f^{(n)}$, more precisely
  \begin{equation*}
    \IC \ssm U =  H_f^{(n)} \cup h_n\left(I'_n\ssm I_n\right). 
  \end{equation*}
  
  Recall $0 \in H_f^{(n)}$, so $0\not\in U$. Suppose $\alpha_i\in U$
  for some $i$. So $\alpha_i\not\in H_f^{(n)}$ and thus $g_n(\alpha_i) \in
  I'_n$ by construction. But $\alpha_i$ also lies in the image of $h_n$,
  \textit{i.e.}, $\alpha_i=h_n(\beta)$ for some $\beta \in \IC\ssm
  I'_n$. We deduce $g_n(\alpha_i) = g_n(h_n(\beta))=\beta$, which is a
  contradiction. So no $\alpha_i$ lies in $U$. This implies the rest of (i).
  
  We apply Theorem 5.2.3~\cite{Ransford} with $K_1
  =I'_n,K_2=\ecp\ssm U,D_1=\ecp\ssm I'_n,D_2= U,$ and $f$
  as ${h_n}|_{D_1}\colon {D_1}\rightarrow {D_2}$ extended as above to
  send $\infty\mapsto\infty$. Hence
  \begin{equation*}
    \mathrm{cap}(\ecp \ssm U) \le \mathrm{cap}(I'_n).
  \end{equation*}
  
  Recall that $I'_n$ is a hedgehog with at most $qd^n+m$ quills
  and lengths given in (\ref{def:In}) and (\ref{def:Iprimen}). 
  Dubinin's Theorem~\cite{Dubinin} implies
  \begin{equation*}
    \mathrm{cap}(I'_n) \le 4^{-1/(qd^n+m)}
    \max\{|\gamma_1|^{1/d^n},\ldots,|\gamma_q|^{1/d^n},
    |g_n(\alpha_i)| : \alpha_i\in\IC \ssm H_f^{(n)} \}. 
  \end{equation*}

  Combining these two estimates and using $|g_n(\alpha_i)| =
  |f^{(n)}(\alpha_i)|^{1/d^{n}}$ if $\alpha_i\in\IC\ssm H_f^{(n)}$ yields
  \begin{equation*}
    \mathrm{cap}(\ecp\ssm U) \le 4^{-1/(qd^n+m)}
    \max\{|\gamma_1|,\ldots,|\gamma_q|,
    |f^{(n)}(\alpha_1)|
    ,\ldots,|f^{(n)}(\alpha_m)|\}^{1/d^n}.
  \end{equation*}
  This completes the proof of (ii). 
%
\end{proof}

Next we make the estimate in Proposition~\ref{prop:constructK2} more explicit
for polynomials of the shape $T^d+c$. 

\begin{prop}
  \label{prop:constructK}
  Let $d\ge 2$ be an integer and let $f=T^d+c\in\IC[T]$.
  Let $n\in\IN$ such
  that $\pco_{n}^+(f)$ is contained in a hedgehog with at most $q\ge 0$
  quills.  
  Let $m\ge 1$ and let $\alpha_1,\ldots,\alpha_m \in\IC$.
  There is a simply connected domain $U\subset\ecp$
  with the following properties.
  \begin{enumerate}
  \item[(i)] We have   $0,\alpha_1,\ldots,\alpha_m\not\in U$ and
    $\ecp\ssm U$ is a compact subset of $\IC$.
  \item[(ii)] We have
    \begin{equation*}
      \label{eq:capKbound2}
      \mathrm{cap}(\ecp\ssm U)\le
      4^{-1/(qd^n+m)} \cf(f)^{1/d^n} e^{\max\{\lambda_f(0),\lambda_f(\alpha_1), \ldots,
      \lambda_f(\alpha_m)\}}.
    \end{equation*}
  \end{enumerate}
\end{prop}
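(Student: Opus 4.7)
The plan is to deduce Proposition~\ref{prop:constructK} from Proposition~\ref{prop:constructK2} by translating the metric quantities $|\gamma_i|$ and $|f^{(n)}(\alpha_j)|$ appearing there into the intrinsic quantities $\cf(f)$ and $\lambda_f$. First I would check that the hypotheses of Proposition~\ref{prop:constructK2} are satisfied: $f = T^d + c$ is monic of degree $d \ge 2$, and $f'(T) = dT^{d-1}$ vanishes at $0$.

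Next I would reduce to the case where each quill endpoint $\gamma_i$ lies in $\pco_n^+(f)$. Because the only critical point of $f$ is $0$, we have $\pco_n^+(f) = \{f^{(m)}(0) : 1 \le m \le n\}$. Starting from any hedgehog $\hh(\gamma_1,\ldots,\gamma_q)$ containing $\pco_n^+(f)$, for each $i$ I replace $\gamma_i$ by the point of $\pco_n^+(f)$ on the quill $[0,1]\gamma_i$ that is farthest from the origin, and I discard a quill altogether if no nonzero post-critical point lies on it. The modified hedgehog still contains $\pco_n^+(f)$ (it automatically contains $0$), has at most $q$ quills, and each surviving $\gamma_i$ equals $f^{(m_i)}(0)$ for some $m_i \in \{1,\ldots,n\}$. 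Discarding quills can only make the factor $4^{-1/(qd^n+m)}$ smaller, so proving the bound for the reduced hedgehog implies the bound for the original value of $q$.

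With this choice I would apply Proposition~\ref{prop:constructK2} to produce the domain $U$, which gives part (i) directly and the inequality
\begin{equation*}
\mathrm{cap}(\ecp \ssm U) \le 4^{-1/(qd^n+m)}
\max\{|\gamma_1|,\ldots,|\gamma_q|,|f^{(n)}(\alpha_1)|,\ldots,|f^{(n)}(\alpha_m)|\}^{1/d^n}.
\end{equation*}
Now the defining inequality $\max\{1,|z|\} \le \cf(f)\, e^{\lambda_f(z)}$ from (\ref{def:Cf}), combined with the functional equation $\lambda_f(f^{(k)}(z)) = d^k \lambda_f(z)$ and the nonnegativity of $\lambda_f$, yields
\begin{equation*}
|\gamma_i| = |f^{(m_i)}(0)| \le \cf(f)\, e^{d^{m_i}\lambda_f(0)} \le \cf(f)\, e^{d^n \lambda_f(0)}
\quad\text{and}\quad
|f^{(n)}(\alpha_j)| \le \cf(f)\, e^{d^n \lambda_f(\alpha_j)}.
\end{equation*}
Substituting these bounds into the previous inequality and taking the $1/d^n$-th power gives the desired estimate (ii).

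There is no substantial obstacle here; the argument is essentially bookkeeping translating Proposition~\ref{prop:constructK2} into its intrinsic form. The one step worth attention is the reduction of the hedgehog to one whose quill endpoints lie in $\pco_n^+(f)$, which requires a small monotonicity observation about how the bound depends on the number of quills.
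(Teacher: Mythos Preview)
Your proposal is correct and follows essentially the same route as the paper: reduce the hedgehog so that each quill endpoint lies in $\pco_n^+(f)$, apply Proposition~\ref{prop:constructK2}, and then bound $|\gamma_i|$ and $|f^{(n)}(\alpha_j)|$ via the defining inequality for $\cf(f)$ together with $\lambda_f(f^{(k)}(z))=d^k\lambda_f(z)$. Your explicit remark about the monotonicity of $4^{-1/(qd^n+m)}$ when quills are discarded is a helpful clarification that the paper leaves implicit.
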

\begin{proof}
  By hypothesis  there are $\gamma_1,\ldots,\gamma_q\in\IC^\times$ with
  $\pco_{n}^+(f)\subset \hh(\gamma_1,\ldots,\gamma_q)$.  By
  shortening and omitting quills we may assume that
  each $\gamma_i$ lies in $\pco_{n}^+(f)$.
  In other words, for each $i$ there is $n_i\in \{1,\ldots,n\}$ with
  $\gamma_i = f^{(n_i)}(0)$. 
  By the definition (\ref{def:Cf}) of $\cf(f)$ we have
  $|\gamma_i|\le \cf(f) e^{\lambda_f(f^{(n_i)}(0))} \le \cf(f) e^{d^{n_i}
    \lambda_f(0)} \le \cf(f) e^{d^n \lambda_f(0)}$.
  The same definition also gives $|f^{(n)}(\alpha_i)|\le \cf(f)
  e^{d^n\lambda_f(\alpha_i)}$ for all $i$. The proof follows from
  Proposition~\ref{prop:constructK2}.  
\end{proof}

Our applications require $\lambda_f(0)=0$ in (\ref{eq:capKbound2}). 

We conclude this section by deducing upper bounds for $\cf(f)$ when
$f=T^d+c\in\IC[T]$. Ingram has related estimates, see Section
2~\cite{Ingram:lowerbound}.

\begin{lemma}
  \label{lem:localhgtlb1}
  Let $d\ge 2$ be an integer and let $c\in \IC$.
  The polynomial $T^d -T- |c|$ has a unique root $r$ in 
  $(0,\infty)$.
  Then $r\ge 1$ and 
  $\log^+|z^d+c| \ge -(d-1)\log r + d\log^+ |z|$
  for all $z\in\IC$ with $|z|\ge r$. 
\end{lemma}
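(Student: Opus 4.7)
The plan is to establish the claim by elementary real-variable considerations followed by a single application of the triangle inequality. First, set $P(T) = T^d - T - |c|$. On $(0,\infty)$ we have $P(0) = -|c| \le 0$ and $P(T) \to +\infty$, while $P'(T) = d T^{d-1} - 1$ has a unique positive zero at $T_0 = d^{-1/(d-1)} < 1$ and changes sign from negative to positive there. Hence $P$ is strictly decreasing on $(0, T_0)$ and strictly increasing on $(T_0, \infty)$, giving a unique root $r \in (T_0, \infty)$.

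To get $r \ge 1$, evaluate $P(1) = -|c| \le 0$; since $P$ is strictly increasing on $[T_0,\infty) \supset [1,\infty)$ and vanishes at $r$, this forces $r \ge 1$.

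For the inequality, fix $z \in \IC$ with $|z| \ge r$. By the defining equation $|c| = r^d - r$, so the reverse triangle inequality gives
\begin{equation*}
|z^d + c| \ge |z|^d - |c| = |z|^d - r^d + r.
\end{equation*}
The target bound in multiplicative form is $|z^d+c| \ge |z|^d / r^{d-1}$, so I would show
\begin{equation*}
|z|^d - r^d + r \ge \frac{|z|^d}{r^{d-1}},
\end{equation*}
which after rearranging reads $|z|^d (r^{d-1}-1) \ge r^{d-1}(r^d - r) = r^d(r^{d-1}-1)$. If $r > 1$, then $r^{d-1}-1 > 0$ and this reduces to $|z|^d \ge r^d$, which holds by hypothesis. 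If $r = 1$, then $|c| = 0$ and both sides of the original inequality reduce to $|z|^d$.

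Finally, since $|z|^d / r^{d-1} \ge r \ge 1$, the lower bound $|z^d+c| \ge |z|^d/r^{d-1}$ also ensures $|z^d+c| \ge 1$, so $\log^+|z^d+c| = \log|z^d+c|$ and $\log^+|z| = \log|z|$; taking logarithms yields the stated inequality. There is no real obstacle here beyond bookkeeping; the only place to be careful is the degenerate case $r = 1$ (forcing $c = 0$), where one divides by $r^{d-1}-1=0$ and must treat equality separately.
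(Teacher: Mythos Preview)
Your proof is correct. The route differs from the paper's in one substantive respect: after reducing to showing $|z^d+c|\ge |z|^d/r^{d-1}$ for $|z|\ge r$, the paper normalises to $c\ge 0$ and applies the Minimum Modulus Principle to $w\mapsto 1+c/w$ on $\{|w|\ge r^d\}$, locating the extremum on the boundary circle. You instead use only the reverse triangle inequality $|z^d+c|\ge |z|^d-|c|$ together with the identity $|c|=r^d-r$, and then check the elementary real inequality $|z|^d-r^d+r\ge |z|^d/r^{d-1}$, which factors as $(|z|^d-r^d)(r^{d-1}-1)\ge 0$. This is strictly more elementary---no complex analysis is invoked---and even yields the slightly sharper intermediate bound $|z^d+c|\ge |z|^d-|c|$, with equality to $|z|^d/r^{d-1}$ precisely on the circle $|z|=r$. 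The paper's Minimum Modulus argument, by contrast, makes it transparent \emph{where} the worst case occurs (at $z^d=-r^d$), which is conceptually useful but not needed for the stated lemma.
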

\begin{proof}
  Both statements are invariant under rotating $c$. So we may assume
  $c\in [0,\infty)$.
  
  The polynomial $T^d-T-c$ is a convex function on $(0,\infty)$ and
  it is negative for small
  positive arguments. So it has exact one root $r$ in
  $(0,\infty)$. We must have $r\ge 1$ as $1^d-1-c\le 0$. 

  Let $z\in\IC$. We prove the lemma by showing 
  \begin{equation}
    \label{eq:maxmaxlb}
    \frac{\max\{1,|z^d+c|\}}{\max\{1,|z|^d\}} \ge \frac{1}{r^{d-1}}
  \end{equation}
  for all $z\in\IC$ with $|z|\ge r$.
  Note that $|z|\ge r\ge 1$ gives  $|z|^d-c\ge |z|$.
  The triangle inequality implies $|z^d+c|\ge \bigl||z^d|-c\bigr|\ge
  |z|\ge 1$. So the left-hand side of (\ref{eq:maxmaxlb}) equals
  $|z^d+c|/|z^d|=|1+c/z^d|>0$.
  By the Minimum Modulus Principle, 
  we have $|1+c/z^d|\ge 1$ for all $z$ with $|z|\ge r$,
  or $|1+c/z^d|$ attains a minimum $<1$ on the boundary $|z|=r$.
  In the first case (\ref{eq:maxmaxlb}) holds true.
  Finally, if $|z|=r$ and $|1+c/z^d|$ is minimal, then necessarily
  $z^d =
  -r^d$ by elementary geometry. In this case
  $|1+c/z^d| = |1-c/r^d|=|(r^d-c)/r^d| = r^{1-d}$ and we
  recover (\ref{eq:maxmaxlb}). 
\end{proof}

\begin{lemma}
  \label{lem:iteratefabsbound}
  Let $d\ge 2$ be an integer and let $c\in\IC$.
  Let $r\ge 1$ be as in Lemma~\ref{lem:localhgtlb1}, then
  $\cf(T^d+c) \le r$.
\end{lemma}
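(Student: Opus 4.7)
The plan is to show directly that $\max\{1,|z|\}/e^{\lambda_f(z)} \le r$ for every $z \in \IC$, and to split into two regimes based on whether $|z| \ge r$ or $|z| < r$.

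The easy regime is $|z| < r$. Since $r \ge 1$ and $\lambda_f(z) \ge 0$ (as $\log^+ \ge 0$), we immediately get $\max\{1,|z|\}/e^{\lambda_f(z)} \le \max\{1,|z|\} < r$. So the work is entirely in the regime $|z| \ge r$.

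For $|z| \ge r$, the first observation is that the set $\{w \in \IC : |w| \ge r\}$ is forward invariant under $f$. Indeed, Lemma~\ref{lem:localhgtlb1} applied with $|z| = r$ gives $\log^+|f(z)| \ge -(d-1)\log r + d\log r = \log r$, and monotonicity of the right-hand side in $|z|$ gives $|f(z)| \ge r$ whenever $|z| \ge r$. Iterating, $|f^{(n)}(z)| \ge r \ge 1$ for all $n \ge 0$, so we can replace every $\log^+$ below by $\log$.

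Now I set $a_n = \log|f^{(n)}(z)|/d^n$ so that $\lambda_f(z) = \lim_{n\to\infty} a_n$. Applying Lemma~\ref{lem:localhgtlb1} to $f^{(n)}(z)$ in place of $z$ and dividing by $d^{n+1}$ yields
\begin{equation*}
a_{n+1} \ge a_n - \frac{(d-1)\log r}{d^{n+1}}.
\end{equation*}
Telescoping and using $\sum_{k=1}^{\infty} d^{-k} = 1/(d-1)$ gives
\begin{equation*}
a_n \ge a_0 - \log r \cdot (1 - d^{-n}),
\end{equation*}
so passing to the limit $\lambda_f(z) \ge \log|z| - \log r = \log^+|z| - \log r$ (using $|z| \ge r \ge 1$). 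Exponentiating rearranges to $\max\{1,|z|\}/e^{\lambda_f(z)} \le r$, as desired.

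Combining both regimes gives $\sup_{z\in\IC} \max\{1,|z|\}/e^{\lambda_f(z)} \le r$, and since $r \ge 1$ the outer $\max\{1,\cdot\}$ in the definition of $\cf(f)$ is harmless, yielding $\cf(T^d+c) \le r$. I do not anticipate a real obstacle here: the telescoping relies only on Lemma~\ref{lem:localhgtlb1} and the forward invariance of $\{|z| \ge r\}$, both of which are immediate from what has already been established.
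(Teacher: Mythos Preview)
Your proof is correct and follows essentially the same approach as the paper: forward invariance of $\{|z|\ge r\}$ from Lemma~\ref{lem:localhgtlb1}, then a telescoping sum to obtain $\lambda_f(z)\ge \log^+|z|-\log r$ for $|z|\ge r$, with the regime $|z|<r$ handled via $\lambda_f\ge 0$. The only slip is that ``$\max\{1,|z|\}<r$'' should read ``$\le r$'' (equality can occur when $r=1$, i.e.\ $c=0$), but this is harmless for the conclusion.
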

\begin{proof}
  Set $f=T^d+c$. 
  Observe that by Lemma~\ref{lem:localhgtlb1} we have $|f(z)|\ge r$ if
  $|z|\ge r$. By a simple induction we find  $\log^+ |f^{(n)}(z)| \ge -(d-1)\log r +
  d\log^+ |f^{(n-1)}(z)|$ for all $n\ge 1$ if $|z|\ge r$. 
  Considering the telescoping sum we get
  \begin{alignat*}1
    \frac{\log^+|f^{(n)}(z)|}{d^n} - \log^+|z| &= \sum_{j=1}^n
    \frac{\log^+|f^{(j)}(z)|}{d^j} - \frac{d\log^+
      |f^{(j-1)}(z)|}{d^{j}} \\
    & \ge \sum_{j=1}^n \frac{-(d-1)\log r}{d^j}
    \ge  \sum_{j=1}^\infty \frac{-(d-1)\log r}{d^j} = -\log r.
  \end{alignat*}
  On taking the limit $n\rightarrow \infty$ the
  left-hand side becomes $\lambda_f(z)-\log^+|z|$ by (\ref{eq:deflambdafv}). 
  So we obtain $\log^+|z| \le \log r + \lambda_f(z)$ if $|z|\ge r$.
  Since $\lambda_f$ is non-negative,  this bound also holds if
  $|z|<r$.
  We obtain $\log^+|z|-\lambda_f(z)\le \log r$ for all $z\in\IC$. The
  lemma follows from
  the definition (\ref{def:Cf}).  
\end{proof}

\begin{lemma}
  \label{lem:Cflessthan2}
  Let $d\ge 2$ be an integer, let $c\in\IC$, and let $f=T^d+c$.
  Suppose $\pco^+(f)=\{f^{(n)}(0)
  : n\in\IN\}$ is a bounded set. 
  \begin{enumerate}
  \item [(i)] We have $|c|\le 2^{1/(d-1)}$ and
      $\cf(f) \le 2^{1/(d-1)}$. 
    \item[(ii)]
      Suppose $c^{d-1}\not=-2$, then $\cf(f) < 2^{1/(d-1)}$. 
    \item[(iii)] If $d$ is odd, then $\cf(f) < 2^{1/(d-1)}$.
  \end{enumerate}
\end{lemma}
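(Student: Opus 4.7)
The plan is to reduce everything to Lemma~\ref{lem:iteratefabsbound}, which gives $\cf(f)\le r$ where $r\ge 1$ is the positive root of $T^d-T-|c|=0$. Since $g(T)=T^d-T$ has $g'(T)=dT^{d-1}-1>0$ on $[1,\infty)$, the assignment $|c|\mapsto r$ is strictly increasing, so any (strict) upper bound for $|c|$ transfers to a (strict) upper bound for $r$. Writing $s=2^{1/(d-1)}$, one checks $g(s)=s^d-s=2s-s=s$, so $|c|\le s$ will force $r\le s$ (with strictness preserved). I will then show that the hypothesis that $\{f^{(n)}(0):n\in\IN\}$ is bounded places the required constraints on $|c|$, working with the recurrence $z_n=f^{(n)}(0)$, $z_1=c$, $z_{n+1}=z_n^d+c$.

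For (i), I would prove the following escape lemma: if for some $n$ one has $|z_n|\ge\max\{|c|,s\}$ and $|z_n|^{d-1}>2$, then $|z_{n+1}|\ge |z_n|^d-|c|\ge |z_n|(|z_n|^{d-1}-1)>|z_n|$, and consequently $|z_{n+1}|^{d-1}>|z_n|^{d-1}>2$; iterating shows $|z_n|\to\infty$. If $|c|>s$ this applies already at $n=1$ since $|z_1|=|c|$ and $|c|^{d-1}>2$, contradicting boundedness. Hence $|c|\le s$, and the monotonicity observation yields $\cf(f)\le r\le s$.

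For (ii), suppose $c^{d-1}\ne -2$. If $|c|<s$ strict monotonicity immediately gives $r<s$, so assume $|c|=s$, equivalently $|c^{d-1}|=2$. The identity $|c^{d-1}+1|^2=5+2\mathrm{Re}(c^{d-1})\ge 5-2|c^{d-1}|=1$ holds with equality exactly when $c^{d-1}=-2$; by hypothesis this is excluded, so $|c^{d-1}+1|>1$. But then $|z_2|=|c|\cdot|c^{d-1}+1|>s$, which makes $|z_2|^{d-1}>2$ and launches the escape induction of (i) at $n=2$, contradicting boundedness. So $|c|<s$ and $\cf(f)\le r<s$.

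For (iii), assume $d$ is odd; I will show the bounded-orbit hypothesis forces $c^{d-1}\ne -2$, after which (ii) concludes. Suppose for contradiction that $c^{d-1}=-2$, so $c^d=-2c$ and $z_2=c^d+c=-c$. Using $d$ odd, $z_3=(-c)^d+c=-c^d+c=2c+c=3c$. Since $|c|=s\ge 1$, $|z_3|^{d-1}=3^{d-1}s^{d-1}=2\cdot 3^{d-1}>2$, and the escape induction of (i) applies starting at $n=3$, contradicting boundedness. Hence $c^{d-1}\ne -2$ and (ii) gives $\cf(f)<s$. The main obstacle throughout is the tight boundary case $|c|=s$, where the crude escape estimate just fails and one must exploit the explicit equality case $c^{d-1}=-2$ of the triangle-inequality bound $|c^{d-1}+1|\ge 1$.
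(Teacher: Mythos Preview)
Your proof is correct and follows essentially the same route as the paper's: bound $|c|$ by $s=2^{1/(d-1)}$, then pass to $r$ via Lemma~\ref{lem:iteratefabsbound}; in the boundary case $|c|=s$ use $|c^{d-1}+1|\ge 1$ with equality iff $c^{d-1}=-2$; and for odd $d$ compute $z_3=3c$ to rule out $c^{d-1}=-2$. The only stylistic difference is that the paper packages the escape step through the inequality $|z|\le \cf(f)$ valid for any $z$ with $\lambda_f(z)=0$ (so $|z_2|\le\cf(f)$ and $|z_3|\le\cf(f)$ directly), whereas you run an explicit escape induction on the recursion $z_{n+1}=z_n^d+c$; both arguments encode the same threshold phenomenon at $|z|=s$.
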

\begin{proof}
  By hypothesis we have $\lambda_f(c)=\lambda_f(f(0))=0$. Thus $|c|
  \le \cf(f)\le r$ by Lemma~\ref{lem:iteratefabsbound} where $r$ is the unique
  positive real number with $r^d=r+|c|$. As $T^d-T-|c|$ has no roots on
  $(0,r)$ by Lemma~\ref{lem:localhgtlb1} we find that it takes
  negative values on $(0,r)$. So $|c|^d \le |c|+|c|$ and therefore
  $|c|\le t$ where $t=2^{1/(d-1)}$. We note $t^d-t-|c|\ge t^d-t-t=0$
  and hence $t\ge r$. We conclude $\cf(f)\le r\le t = 2^{1/(d-1)}$.
  Part (i) follows. 
  
  For the proof of (ii) let us assume $\cf(f) = 2^{1/(d-1)}$ and
  retain the notation from the proof of part (i). Then $r=t$ and hence
  $2^{1/(d-1)}$ is a root of $T^d-T-|c|$. This yields $|c|=2^{1/(d-1)}$
  after a short calculation.

  If $d=2$, then it is well-known that the Mandelbrot set meets the
  circle of radius $2$ with center $0$ in the single point $-2$.  This
  implies (ii) for $d=2$.
  Here is a direct
  verification that extends to all $d\ge 2$. It involves
  $c^d+c$, the  second iterate
  of $0$  under $f$.  
  Indeed, its orbit under $f$ is also bounded and so 
  $|c||c^{d-1}+1|=|c^d+c|\le \cf(f)=2^{1/(d-1)}$.
  But $|c|=2^{1/(d-1)}$ and thus $|c^{d-1}+1|\le 1$.
  The circle around $0$ of radius $2$ meets the closed disk around $-1$ of
  radius $1$ in the single point $-2$. We conclude $c^{d-1}=-2$ and
  this implies (ii).

  In (iii) we have that $d$ is odd. We will prove  $c^{d-1}\not=-2$ by
  contradiction;
  (iii) then follows from (ii). Indeed,  if $c^{d-1}=-2$, then $f^{(2)}(0)=c^d+c=-c$. So
  $f^{(3)}(0) = -c^d+c$ has absolute value $|c||c^{d-1}-1| = 3|c| >
  2^{1/(d-1)}\ge \cf(f)$. But then $\lambda_f(f^{3}(0))>0$ and this
  contradicts the hypothesis that the $f$-orbit of $0$ is bounded. 
\end{proof}

We are now ready to prove Lemma~\ref{lem:quillhypothesis} from the introduction.

\begin{proof}[Proof of Lemma~\ref{lem:quillhypothesis}]
  Let $\sigma_0$ be a complex embedding of $K$ as in (i) or (ii).
  By hypothesis,
  $\sigma_0(f)\not=T^2-2$ and $d=p$ is a prime. Moreover,
  $\pco^+(\sigma_0(f))$ is bounded in both (i) and (ii). So
  Lemma~\ref{lem:Cflessthan2} yields $\cf(\sigma_0(f))<2^{1/(p-1)}\le
  2$; if $p=2$ we use part (ii) and if $p\ge 3$ we use (iii).
  
  For case (i) the embedding  $\sigma_0$  real. The
   set
  $\pco^+(\sigma_0(f))=\{\sigma_0(f)(0),\sigma_0(f)^{(2)}(0),\ldots\}$
  lies in $\IR$. So it is contained
  in the union $[-\alpha,0]\cup [0,\beta]$ of $q=2$ quills.
  Thus $q\log \cf(\sigma_0(f))<\log 4$ and (\ref{eq:quillcondition}) is satisfied.  

  In case (ii)  the set $\pco^{+}(\sigma_0(f))$ is finite and 
  can be covered  by 
  $q\le 2p-2$ quills.
  Now we find $q\log \cf(\sigma_0(f)) < 2(p-1) \log 2^{1/(p-1)} = \log
  4$ and (\ref{eq:quillcondition}) is again satisfied.  
\end{proof}


\section{Proof of Main Results}
\label{sec:proofs}

The following lemma is well-known; the second claim goes back to
Gleason, see Lemme 2, Expos\'e XIX~\cite{DouadyHubbard:OrsayI} for
$p=2$, and to Epstein and Poonen~\cite{Epstein:Integrality}.
For the reader's convenience we give a proof. 

\begin{lemma}
  \label{lem:unramified}
  Let $p$ be a prime number and $f=T^p+c \in \IC[T]$.
  \begin{enumerate}
  \item [(i)] If $0$ is $f$-preperiodic, than $c$ is an
    algebraic integer.
  \item[(ii)] If $0$ is $f$-periodic, then $\IQ(c)/\IQ$ is unramified
    above $p$.
  \end{enumerate}
\end{lemma}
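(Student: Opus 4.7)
The plan is to introduce, for each $k\ge 0$, the polynomial $P_k(X)\in\IZ[X]$ obtained by regarding the iterate $f^{(k)}(0)$ as a polynomial in $c$ and replacing $c$ by an indeterminate $X$. The recursion $f^{(k+1)}(0)=f^{(k)}(0)^p+c$ translates to $P_{k+1}(X)=P_k(X)^p+X$, so an immediate induction gives $P_0=0$ and, for $k\ge 1$, $P_k\in\IZ[X]$ is monic of degree $p^{k-1}$.

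For part (i), preperiodicity of $0$ furnishes integers $m\ge 0$ and $n\ge 1$ with $P_{m+n}(c)=P_m(c)$, so $c$ is a root of $P_{m+n}-P_m\in\IZ[X]$. Since $\deg P_{m+n}>\deg P_m$, this polynomial is monic of degree $p^{m+n-1}$, and hence $c$ is an algebraic integer.

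For part (ii) the crux is to reduce modulo $p$. Using $(u+v)^p\equiv u^p+v^p\pmod{p}$ together with the recursion, a short induction on $k$ yields
\begin{equation*}
  P_k(X)\equiv X+X^p+X^{p^2}+\cdots+X^{p^{k-1}}\pmod{p}
\end{equation*}
in $\mathbb{F}_p[X]$. If $0$ has period $n$, then $P_n(c)=0$. The essential observation is that the formal derivative of the right-hand side is the constant $1$, because $(X^{p^j})'=0$ for $j\ge 1$ in characteristic $p$. Hence the reduction $\bar P_n\in\mathbb{F}_p[X]$ is separable.

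To convert separability of $\bar P_n$ into the unramifiedness of $\IQ(c)/\IQ$ at $p$, I would argue as follows. By part (i) the minimal polynomial $m_c\in\IQ[X]$ of $c$ lies in $\IZ[X]$, is monic, and divides $P_n$ in $\IZ[X]$ by Gauss' lemma. Reducing modulo $p$, $\bar m_c$ divides the separable polynomial $\bar P_n$ in $\mathbb{F}_p[X]$ and is therefore itself separable, so $p\nmid \disc{m_c}$. The identity $\disc{m_c}=[\cO_{\IQ(c)}:\IZ[c]]^2\disc{\IQ(c)/\IQ}$ then forces $p\nmid\disc{\IQ(c)/\IQ}$, i.e.\ $p$ is unramified in $\IQ(c)/\IQ$. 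The only non-routine ingredient is spotting the mod-$p$ identity above; once the right-hand side is recognised as an additive polynomial with derivative $1$, separability is automatic and the rest is standard algebraic number theory.
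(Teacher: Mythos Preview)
Your argument is correct and follows essentially the same route as the paper: both show that the derivative of $P_n$ is congruent to $1$ modulo $p$, whence $\bar P_n$ is separable and the discriminant of the minimal polynomial of $c$ is prime to $p$. The paper obtains $P_n'\equiv 1\pmod p$ directly from the recursion $P_{k}'=p\,P_{k-1}^{p-1}P_{k-1}'+1$, whereas you first establish the explicit additive form $P_k\equiv X+X^p+\cdots+X^{p^{k-1}}\pmod p$; this is a pleasant extra observation but leads to the same conclusion.
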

\begin{proof}
  We shall 
  consider $P=T^p+X$ as polynomial in $\IZ[T,X]$.

  We have $P^{(0)}=T,P^{(1)}=P=T^p+X, P^{(2)} = P(P(T,X),X)=(T^p+X)^p+X,
  P^{(3)}=P(P^{(2)}(T,X),X)=((T^p+X)^p+X)^p+X$ etc.
  A simple induction shows that $P^{(k)}$ is monic of degree $p^{k-1}$
  in $X$ for all $k\in\IN$. 

  Suppose $c\in\IC$ such that $0$ is $f$-preperiodic where $f=T^p+c$.
  So there exist integers $k,l$ with $0\le k<l$ such that
  $P^{(k)}(0,c)=P^{(l)}(0,c)$. This produces a monic
  polynomial in integral coefficients
  of degree $p^{l-1}$ that vanishes at $c$. Part (i) follows.
  
  For part (ii) we observe that
  $P^{(l)}=(P^{(l-1)})^p+X$ for $l\ge 1$. The derivative by $X$ satisfies
  $\frac{\partial }{\partial X} P^{(l)} = pP^{(l-1)}
  \frac{\partial }{\partial X}P^{(l-1)}+1 \in 1 + p\IZ[T,X]$.
  We specialize $T$ to $0$ and get 
  $\frac{\partial }{\partial X} P^{(l)}(0,X) \in 1 + p\IZ[X]$.

  The determinant of  the Sylvester matrix of the pair
  $P^{(l)}(0,X),\frac{\partial }{\partial X} P^{(l)}(0,X)$ 
  is up-to sign the discriminant of $P^{(l)}(0,X)$. The
  reduction modulo $p$ of this matrix has upper triangular form with
  diagonal entries $\equiv \pm 1 \imod p$. In particular, the
  discriminant is not divisible by $p$.

  If $0$ is $f$-periodic with $f=T^p+c$, then $f^{(l)}(0)=0$ for some
  $l\in\IN$. The $\IQ$-minimal polynomial of $c$ has integral
  coefficients by (i) and it divides $P^{(l)}(0,X)$. So its
  discriminant is not divisible by $p$ either. This implies (ii). 
\end{proof}

The following result is a special case of the P\'olya--Bertrandias
Theorem; it is crucial for our application and is also at the core of
Dimitrov's proof of the Schinzel--Zassenhaus Conjecture.

Let $K$ be a number field and let $\cO_K$ denote the ring of algebraic
integers of $K$. For $\phi = \sum_{j\ge 0} \phi_j X^{-j}\in K[[1/X]]$
and $\sigma\in\hom(K,\IC)$ we define $\sigma(\phi) = \sum_{j\ge 0}
\sigma(\phi_j)X^{-j}\in\IC[[1/X]]$.

\begin{thm}[P\'olya--Bertrandias]
  \label{thm:polyabertrandias}
  Let $K$ be a number field and $\phi\in\cO_K[[1/X]]$. Suppose
  that for each $\sigma\in
  \hom(K,\IC)$ there exists a connected open subset $U_\sigma
  \subset\IC$ with the following properties.
  \begin{itemize}
  \item[(i)] The complement $\IC\ssm U_\sigma$ is compact. 
  \item [(ii)] The formal power series
    $\sigma(\phi)$ converges at all $z\in\IC$ with $|z|$ sufficiently large.
  \item[(iii)] The holomorphic function induced by $\sigma(\phi)$ as in
    (ii) extends to a holomorphic function on $U_\sigma$. 
  \end{itemize}
  If $\prod_{\sigma\in\hom(K,\IC)} \mathrm{cap}(\IC\ssm U_\sigma)<1$, then $\phi$
  is a rational function. 
\end{thm}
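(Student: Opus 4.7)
The plan is to combine Kronecker's rationality criterion for power series with a product-formula integrality argument for the Hankel determinants, where the capacity hypothesis supplies the decisive upper bound on each archimedean factor.

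First I would form the Hankel determinants $H_n = \det(\phi_{i+j})_{0 \le i,j \le n}$ of $\phi = \sum_{j \ge 0} \phi_j X^{-j}$. Kronecker's theorem states that $\phi$ represents a rational function if and only if $H_n = 0$ for all sufficiently large $n$. Since $\phi_j \in \cO_K$, each $H_n$ lies in $\cO_K$ and every embedding commutes with the determinant, so $\sigma(H_n) = \det(\sigma(\phi)_{i+j})$. The absolute norm $N(H_n) = \prod_{\sigma \in \hom(K,\IC)} \sigma(H_n)$ therefore lies in $\IZ$, and satisfies $|N(H_n)| \ge 1$ whenever $H_n \ne 0$.

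The analytic heart of the argument is the capacity estimate
$$\limsup_{n \to \infty} |\sigma(H_n)|^{1/(n(n+1))} \le \mathrm{cap}(\IC \ssm U_\sigma)$$
for each embedding $\sigma$. To prove it, I would pick a simple closed contour $\gamma_\sigma$ in $U_\sigma$ winding once around the compact set $\IC \ssm U_\sigma$; hypotheses (i)--(iii) allow one to choose $\gamma_\sigma$ arbitrarily close to $\IC \ssm U_\sigma$ on which $\sigma(\phi)$ is holomorphic. By the Cauchy formula, $\sigma(\phi)_j = (2\pi i)^{-1} \oint_{\gamma_\sigma} \sigma(\phi)(z)\, z^{j-1} dz$. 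Substituting into the Leibniz expansion of the determinant and symmetrizing the resulting $(n+1)$-fold integral in the usual way produces
$$\sigma(H_n) = \frac{1}{(n+1)!\,(2\pi i)^{n+1}} \oint_{\gamma_\sigma}\!\!\cdots\!\!\oint_{\gamma_\sigma} \prod_{i=0}^n \sigma(\phi)(z_i) \prod_{0 \le i < j \le n}(z_j - z_i)^2 \, dz_0 \cdots dz_n,$$
the Vandermonde factor appearing squared after symmetrization. Bounding trivially by the supremum of $|\sigma(\phi)|$ on $\gamma_\sigma$, the length of $\gamma_\sigma$, and the supremum of $\prod_{i<j}|z_j - z_i|^2$ over $(n+1)$-tuples on $\gamma_\sigma$ yields $|\sigma(H_n)| \le C_\sigma^{n+1} \cdot d_{n+1}(\gamma_\sigma)^{n(n+1)}$. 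Letting $\gamma_\sigma$ shrink onto $\IC \ssm U_\sigma$ and invoking the Fekete--Szeg\H{o} theorem ($d_{n+1} \to \mathrm{cap}$) gives the displayed estimate.

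Taking the product over all $\sigma$ yields
$$\limsup_{n\to\infty} |N(H_n)|^{1/(n(n+1))} \le \prod_{\sigma} \mathrm{cap}(\IC \ssm U_\sigma) < 1,$$
so $|N(H_n)| < 1$ for all large $n$. Since $N(H_n) \in \IZ$, this forces $N(H_n) = 0$ and hence $H_n = 0$ for large $n$. Kronecker's criterion then concludes that $\phi$ is rational. The main obstacle is the capacity bound: one must deform $\gamma_\sigma$ so that it tightly encloses the possibly complicated compact $\IC \ssm U_\sigma$ (in our intended applications a ramified topological tree, not a disk) while remaining inside $U_\sigma$, and relate the Vandermonde supremum on $\gamma_\sigma$ to $\mathrm{cap}(\IC\ssm U_\sigma)$ via upper semi-continuity of the Fekete diameters under Hausdorff convergence of compact sets.
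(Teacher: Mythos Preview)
The paper does not actually prove this theorem; it simply invokes Th\'eor\`eme~5.4.6 of Amice's book (noting that integrality of the coefficients lets one take $P_1(K)=\emptyset$ there) and records that one may pair each $U_\sigma$ with the complex conjugate of $U_{\overline\sigma}$. Your proposal, by contrast, sketches the classical self-contained proof via Kronecker's rationality criterion and the Heine--Andr\'eief integral representation of the Hankel determinants, which is essentially P\'olya's original argument over $\IQ$ transported to a number field via the norm. So your route is genuinely different from the paper's: it is more elementary and transparent, at the cost of some analytic bookkeeping that the citation hides.

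Two small technical points are worth tightening. First, the Andr\'eief identity applied with the measure $(2\pi i)^{-1}\sigma(\phi)(z)\,z^{-1}dz$ produces an extra factor $\prod_i z_i^{-1}$ in your integral formula for $\sigma(H_n)$; this is harmless for the $n(n+1)$-th root asymptotics provided $0\notin\gamma_\sigma$, but it also means that when $0\in U_\sigma$ the deformation from the large circle to $\gamma_\sigma$ can cross a pole of $\sigma(\phi)(z)/z$. The cleanest fix is to work with the shifted Hankel determinants $\det(\phi_{i+j+1})_{0\le i,j\le n}$, whose vanishing for large $n$ equally characterises rationality and for which every Cauchy integrand $\sigma(\phi)(z)z^{i+j}$ is holomorphic on all of $U_\sigma$. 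Second, as stated the theorem only assumes $U_\sigma$ connected, so $\IC\ssm U_\sigma$ may be disconnected; in that case a \emph{single} simple closed curve cannot be shrunk to have capacity approaching $\mathrm{cap}(\IC\ssm U_\sigma)$ (a Jordan domain containing two far-apart points has capacity bounded below by a quarter of their distance). You should therefore allow $\gamma_\sigma$ to be a cycle, for instance the boundary of an $\epsilon$-neighbourhood $K_\epsilon$ of $\IC\ssm U_\sigma$, and use outer regularity $\mathrm{cap}(K_\epsilon)\to\mathrm{cap}(\IC\ssm U_\sigma)$. In the paper's applications $U_\sigma\cup\{\infty\}$ is in fact simply connected (Proposition~\ref{prop:constructK}), so the complement is a full continuum and your single-curve version already suffices there.
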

\begin{proof} 
  This is a special case of  Theorem~5.4.6~\cite{Amice}. Note that our
  power series has integral coefficients, so we can take
  $P_1(K)=\emptyset$ in the reference.
  If $\overline\sigma$ denotes $\sigma$ composed with complex
  conjugation, then we may take $U_{\overline\sigma}$ to be the
  complex conjugate of $U_{\sigma}$. Recall that $\mathrm{cap}(\IC\ssm
  U_\sigma)$ is the transfinite diameter of $\IC\ssm U_\sigma$.   
\end{proof}

We come to the main technical result of our paper.

\begin{prop}
  \label{prop:prefinal}
  Let $p$ be a prime number, let $K$ be a number field,  and let
  $f=T^p+c\in K[T]$.
  Suppose further that there are $k,l\in\IN$ with $k<l,
  f^{(k-1)}(0)=f^{(l-1)}(0),$ and the following properties. 
  \begin{enumerate}
  \item [(i)]  
    Let $\delta$ be as in  (\ref{def:delta}) and assume $a^{p^{k-\delta-1}}\equiv a^{p^{l-\delta-1}} \imod
    {p\cO_K}$ for all $a\in \cO_K$.
  \item[(ii)] For all field embeddings $\sigma\in \hom( K,\IC)$    
    we assume  that there exists $n_\sigma\in\IN$ such that  $\pco_{n_\sigma}^+(\sigma(f))$ is contained in a hedgehog with at
    most $q_\sigma\ge 0$ quills. 
  \end{enumerate}
  Let $x\in\overline K$ be an algebraic integer, then
  one of the following conclusions holds true:
  \begin{enumerate}
  \item [(A)]
    We have
    \begin{equation}
      \label{eq:lambdaprelb}
      \lambda^{\mathrm{max}}_{f}(x)
      \ge  \frac{1}{[K:\IQ]}\sum_{\sigma\in\hom(K,\IC)}
      \frac{1}{p^{l+n_{\sigma}}}\left(\frac{\log 4}{q_\sigma+2[K(x):K]p^{-n_\sigma}}-{\log
        \cf(\sigma(f))}\right)
    \end{equation}
  \item[(B)] We have $[K(f^{(k)}(x)):K]\le [K(x):K]/p$.
  \item[(C)] The point $x$ is $f$-preperiodic,  $\mathrm{preper}(x) \le k$, and
    $\mathrm{per}(x) \le (l-k)[K(x):K]$.
  \end{enumerate}
\end{prop}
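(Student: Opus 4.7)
The plan is to combine the power series construction of Section~\ref{sec:powerseries} with the simply connected domains of Section~\ref{sec:construction} and then apply the P\'olya--Bertrandias theorem. Let $D = [K(x):K]$, let $A \in \cO_K[X]$ be the $K$-minimal polynomial of the algebraic integer $x$, and write $A = \prod_{j=1}^D (X - x_j)$ with $x_1 = x$. Hypothesis (i) lets us invoke Proposition~\ref{prop:powerseries}, producing $\phi \in \cO_K[[1/X]]$ with $\phi^p = (A_l/A_k)^{p-1}$. For each $\sigma \in \hom(K,\IC)$, I would apply Proposition~\ref{prop:constructK} to $\sigma(f)$ with $n = n_\sigma$ and with the $\alpha_i$'s taken to be the $2D$ points $\sigma(f)^{(k)}(\sigma(x_j))$ and $\sigma(f)^{(l)}(\sigma(x_j))$ for $j=1,\ldots,D$, so $m_\sigma \le 2D$. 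This yields a simply connected $U_\sigma \subset \ecp$ containing a neighborhood of $\infty$ and avoiding $0$ as well as every zero and pole of $\sigma(A_l/A_k)$. On $U_\sigma$ the rational function $\sigma(A_l/A_k)$ is holomorphic and non-vanishing, so by monodromy the $p$-th root of $\sigma(A_l/A_k)^{p-1}$ that agrees with $\sigma(\phi)$ near $\infty$ (where $\sigma(\phi) = 1 + O(1/X)$) extends $\sigma(\phi)$ holomorphically to all of $U_\sigma$.

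For the capacity estimate, the identity $f^{(k-1)}(0) = f^{(l-1)}(0)$ makes $0$ preperiodic under $\sigma(f)$, so $\lambda_{\sigma(f)}(0) = 0$, and each $\alpha_i = \sigma(f)^{(j)}(\sigma(x_s))$ with $j \in \{k,l\}$ satisfies $\lambda_{\sigma(f)}(\alpha_i) \le p^l \lambda^{\mathrm{max}}_f(x)$ by the functional equation of the local height and the definition of $\lambda^{\mathrm{max}}_f$. Proposition~\ref{prop:constructK}(ii) then bounds $\mathrm{cap}(\ecp \ssm U_\sigma)$ by $4^{-1/(q_\sigma p^{n_\sigma}+m_\sigma)} \cf(\sigma(f))^{1/p^{n_\sigma}} \exp(p^l \lambda^{\mathrm{max}}_f(x))$. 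Taking logarithms, using $m_\sigma \le 2D$, summing over $\sigma$, and rearranging shows that the failure of conclusion (A) implies $\sum_\sigma \log \mathrm{cap}(\ecp \ssm U_\sigma) < 0$. Theorem~\ref{thm:polyabertrandias} then forces $\phi$ to be a rational function in $K(X)$.

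It remains to deduce (B) or (C) from $\phi \in K(X)$. Since $\phi^p = (A_l/A_k)^{p-1}$ and $\gcd(p,p-1)=1$, every discrete valuation of $A_l/A_k$ is divisible by $p$, and as both $A_k, A_l$ are monic of degree $D$, the ratio is a $p$-th power in $\overline K(X)$. Writing $A_k = M_k^{D/D_k}$ with $M_k$ the $K$-minimal polynomial of $f^{(k)}(x)$ and $D_k = [K(f^{(k)}(x)):K]$, and similarly for $l$, I would split cases. If the Galois orbits of $f^{(k)}(x)$ and $f^{(l)}(x)$ are disjoint, then at any zero of $A_k$ not of $A_l$ the multiplicity $D/D_k$ must be divisible by $p$, yielding (B). Otherwise the orbits coincide, forcing $D_k = D_l$ and hence $A_k = A_l$; in that case there is a permutation $\pi \in S_D$ with $f^{(k)}(x_j) = f^{(l)}(x_{\pi(j)})$, and iterating this identity along the $\pi$-orbit of $j=1$, of length $N \le D$, yields $f^{(k+N(l-k))}(x) = f^{(k)}(x)$, producing (C). The hardest step is this last orbit argument: one must exploit that a single orbit of $\pi$ has length at most $D$, rather than using the full order of $\pi$ (which can be as large as $\mathrm{lcm}(1,\ldots,D)$), in order to recover the sharp bound $\mathrm{per}(x) \le (l-k)[K(x):K]$ rather than a $D!$-type bound.
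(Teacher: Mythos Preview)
Your proposal is correct and follows essentially the same route as the paper: build $\phi$ via Proposition~\ref{prop:powerseries}, produce the domains $U_\sigma$ via Proposition~\ref{prop:constructK}, extend $\sigma(\phi)$ by monodromy, and then let the P\'olya--Bertrandias theorem split the argument into the irrational case (giving (A)) and the rational case (giving (B) or (C)).

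The only noteworthy difference is in the endgame when $\phi$ is rational. The paper splits on whether $D_k=[K(f^{(k)}(x)):K]\le D/p$; if not, it finds a single Galois automorphism $\tau$ with $f^{(k)}(\tau(x))=f^{(l)}(x)$ and iterates $\tau$ to reach (C). You instead split on whether the Galois orbits of $f^{(k)}(x)$ and $f^{(l)}(x)$ are disjoint or equal, and in the equal case deduce $A_k=A_l$ and run the cycle argument via a permutation $\pi\in S_D$ of the root indices. These are equivalent: the divisibility condition forces the orbits to coincide precisely when $D/D_k$ is not a multiple of $p$, and your permutation is just the Galois iteration read off at the level of roots. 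Your formulation is arguably a bit cleaner, since it avoids naming $\mathrm{Gal}(F/K)$ explicitly and makes transparent why the period bound is $(l-k)$ times the length of a single $\pi$-cycle (at most $D$) rather than the order of $\pi$.
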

\begin{proof}
  We begin by observing $c\in\cO_K$ by Lemma~\ref{lem:unramified}(i).  
  Let $A\in K[X]$ be the $K$-minimal polynomial of $x$. Then $A\in
  \cO_K[X]$ has degree $D=[K(x):K]$

  We apply Proposition~\ref{prop:powerseries} to the ring $R=\cO_K$. There
  exists $\phi \in \cO_K[[1/X]]$ with
  \begin{equation}
    \label{eq:phippminus1}
      \phi^p = (A_l/A_k)^{p-1}
  \end{equation}
  and
  \begin{equation}
    \label{eq:phiprops}
    \phi = \Phi_p(p^2 B/A_k^p)\quad\text{where}\quad\deg B \le \deg A_k^p -
    1 =pD-1;
  \end{equation}
  we recall (\ref{def:Phip}). 
  Then
  $\sigma(\phi)^p  = \sigma(A_l/A_k)^{p-1}$
  for all $\sigma\in\hom(K,\IC)$.  

  We split $\sigma(A) = (X-x_{\sigma,1})\cdots(X-x_{\sigma,D})$ where
  $x_{\sigma,1},\ldots,x_{\sigma,D}\in\IC$ are conjugates of $\sigma(x)$
  over $\sigma(K)\subset\IC$. Then
  \begin{equation*}
    \sigma(\phi)^p =\prod_{j=1}^D
    \left(\frac{X-\sigma(f)^{(l)}(x_{\sigma,j})}{X-\sigma(f)^{(k)}(x_{\sigma,j})}\right)^{p-1}.
  \end{equation*}

  By the degree condition in (\ref{eq:phiprops}) the
  formal power series $\sigma(\phi) \in \IC[[1/X]]$ converges for all
  $z\in\IC$ for which $|z|$ is
  sufficiently large. For these $z$ we have
  \begin{equation}
    \label{eq:holomorphic1}
    \sigma(\phi)(z)^p    =\prod_{j=1}^D
    \left(\frac{1-\sigma(f)^{(l)}(x_{\sigma,j})/z}{1-\sigma(f)^{(k)}(x_{\sigma,j})/z}\right)^{p-1}. 
  \end{equation}

  We apply
  Proposition~\ref{prop:constructK} to $\sigma(f) = T^p+\sigma(c)$ and
  $n_\sigma$. By hypothesis,
  $\pco_{n_\sigma}^+(\sigma(f))$ 
  is contained in a hedgehog with
  at most $q_\sigma\ge 0$ quills. We take $m=2D$ and set
  \begin{equation}
    \label{eq:choosealpha}
    \alpha_1 = \sigma(f)^{(k)}(x_{\sigma,1}),\ldots,
    \alpha_D = \sigma(f)^{(k)}(x_{\sigma,D}),
    \alpha_{D+1} = \sigma(f)^{(l)}(x_{\sigma,1}),\ldots,
    \alpha_{2D} = \sigma(f)^{(l)}(x_{\sigma,D}).
  \end{equation}
  We obtain a simply connected domain $U_\sigma\subset\ecp$ with the two stated
  properties.  

  The right-hand side of (\ref{eq:holomorphic1}) is well-defined and
  non-zero for all $z\in U_\sigma\ssm \{\infty\}$; indeed, none among
  $0$ and the (\ref{eq:choosealpha}) lie in $U_\sigma$ by
  Proposition~\ref{prop:constructK}. So the right-hand side extends to a
  holomorpic function on $U_\sigma$ that never vanishes; the extension
  maps $\infty$ to $1$. By the Monodromy Theorem from complex analysis
  and since $U_\sigma$ is
  simply connected we conclude that $z\mapsto \sigma(\phi)(z)$, defined \textit{a
    priori} only if $|z|$ is large, extends to a holomorphic map
  $\sigma(\phi)\colon U_\sigma\rightarrow\IC$.

  We split up into two cases.

  \textbf{Case 1.} First, suppose that $\phi$ is not a rational function. 

  Theorem~\ref{thm:polyabertrandias} implies
  \begin{equation}
    \label{eq:betrandiaslb}
    0 \le \sum_{\sigma\in\hom(K,\IC)}\log\mathrm{cap}(\ecp\ssm
    U_\sigma). 
  \end{equation}
  Recall that $\lambda_{\sigma(f)}(0)=0$ as $f$ is post-critically
  finite by hypothesis.
  This contribution can be omitted from the capacity bound from
  Proposition~\ref{prop:constructK} since the local canonical height is
  non-negative. So $k\le l$ implies
  \begin{equation}
    \label{eq:capacityub}
    \sum_{\sigma\in\hom(K,\IC)}\log \mathrm{cap}(\ecp\ssm
    U_\sigma) \le \sum_{\sigma\in \hom(K,\IC)}
    -\frac{\log 4}{q_\sigma p^{n_\sigma}+2D}+\frac{\log \cf(\sigma(f))}{p^{n_\sigma}}
    +p^l \max\{\lambda_{\sigma(f)}(x_{\sigma,1}),\ldots,\lambda_{\sigma(f)}(x_{\sigma,D})\}.
  \end{equation}

  We compare (\ref{eq:betrandiaslb}) and (\ref{eq:capacityub})
  and rearrange to find
  \begin{equation*}
    \sum_{\sigma\in \hom(K,\IC)}
    \max\{\lambda_{\sigma(f)}(x_{\sigma,1}),\ldots,\lambda_{\sigma(f)}(x_{\sigma,D})\}
    \ge \frac{1}{p^{l}}\sum_{\sigma\in\hom(K,\IC)}
    \left(\frac{\log 4}{p^{n_\sigma}q_\sigma+2D} -
      \frac{\log
      \cf(\sigma(f))}{p^{n_\sigma}}\right).
  \end{equation*}
  The left-hand side is at most $[K:\IQ]\lambda^{\mathrm{max}}_f(x)$. So
  conclusion (A) holds.

  \textbf{Case 2.} Second, suppose that $\phi$ is a rational function.

  In particular, $\phi$ is meromorphic on $\IC$.
  It follows from  (\ref{eq:phippminus1})  that
  any root of $A_l/A_k$ has vanishing order divisible by $p$. Let $F$
  be a splitting field of $A$. Then $\{f^{(k)}(\tau(x)) : \tau \in
  \mathrm{Gal}(F/K) \}$ are the  roots of $A_k$ and $\{f^{(l)}(\tau(x))
  : \tau \in \mathrm{Gal}(F/K) \}$ are the roots of $A_l$.

  We split up into two subcases.

  \textbf{Subcase 2a.} Suppose $\#\{f^{(k)}(\tau(x)) : \tau\in
  \mathrm{Gal}(F/K) \} \le D/p$. This means
  $[K(f^{(k)}(x)):K]\le D/p$ and we are in conclusion (B) because $D=[K(x):K]$. 

  \textbf{Subcase 2b.}
  Suppose $\#\{f^{(k)}(\tau(x)) : \tau\in \mathrm{Gal}(F/K) \} >
  D/p$. Recall $D=\deg A_k$. Then there exists $\tau$ such that the
  vanishing order of $A_k$ at $f^{(k)}(\tau(x))$ is positive and strictly less than
  $p$. But then it must also be a zero of $A_l$. After replacing
  $\tau$ we may assume
  $f^{(k)}(\tau(x))=f^{(l)}(x)$.  Recall $k<l$. A simple induction
  shows $\tau^e (f^{(k)}(x)) = f^{((l-k)e+k)}(x)$; indeed for $e\ge 2$
  we have
  $$\tau^e(f^{(k)}(x))= \tau( \tau^{e-1}(f^{(k)}(x)))
  =\tau(f^{((l-k)(e-1)+k)}(x)) 
  =f^{((l-k)(e-1)+k)}(\tau(x))=f^{((l-k)e+k)}(x)$$ as $f\in K[T]$.

  Take $e$ to be the minimal positive integer with $\tau^{e} \in
  \mathrm{Gal}(F/K(x))$. So $\tau^e(f^{(k)}(x))
  =  f^{(k)}(x)$. We conclude
  \begin{equation*}
    f^{(k)}(x) = f^{((l-k)e+k)}(x). 
  \end{equation*}
  In particular, $x$ is $f$-preperiodic and $f^{(k)}(x)$ is
  $f$-periodic. In other words, $\mathrm{preper}(x)\le k$. 

  By the Pigeonhole Principle there are integers $e$ and $e'$ with $0\le
  e<e'\le [K(x):K]$ with $\tau^{e}(f^{(k)}(x)) = \tau^{e'}(f^{(k)}(x))$. So
  $f^{((l-k)e+k)}(x)=f^{((l-k)e'+k)}(x)$. Hence $x$ has minimal period at most
  $(l-k)(e'-e) \le (l-k)[K(x):K]$. We are in conclusion (C).
\end{proof}

\begin{thm}
  \label{thm:preperiodic2}
  Let $p$ be a prime number, let $K$ be a number field,  and let
  $f=T^p+c\in K[T]$.
  Suppose further that there are $k,l\in\IN$ with $k<l,
  f^{(k-1)}(0)=f^{(l-1)}(0),$ and the following properties. 
  \begin{enumerate}
  \item [(i)]
    Let $\delta$ be as in (\ref{def:delta})
    and assume $a^{p^{k-\delta-1}}\equiv a^{p^{l-\delta-1}} \imod
    {p\cO_K}$ for all $a\in \cO_K$.
  \item[(ii)]
    We suppose hat $f$ satisfies the Quill Hypothesis. 
  \end{enumerate}
  Then there exists a constant $\kappa>0$ depending on the data above with the following properties.
  \begin{enumerate}
  \item [(i)] If $x$ is an algebraic integer and $f$-wandering, then
    $\lambda_f^{\mathrm{max}} (x) \ge \kappa/[K(x):K]^k$. 
  \item[(ii)] If $x\in\overline K$ is $f$-wandering, then
    $\hat h_f (x) \ge \kappa/[K(x):K]^{k+1}$. 
  \end{enumerate}
\end{thm}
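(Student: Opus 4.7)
The plan is to iterate Proposition~\ref{prop:prefinal}: the $f$-wandering hypothesis rules out its conclusion~(C), while conclusion~(B) strictly decreases the degree over $K$ by a factor of $p$.

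\textbf{Part (i).} Write $D=[K(x):K]$, set $x_0=x$, and $x_{i+1}=f^{(k)}(x_i)$. Since $c\in\cO_K$ by Lemma~\ref{lem:unramified}(i), each $x_i$ is an algebraic integer; since $x_0$ is $f$-wandering so is every $x_i$, so alternative~(C) never applies. Alternative~(B) gives $[K(x_{i+1}):K]\le[K(x_i):K]/p$, so it can occur at most $j_0:=\lfloor\log_p D\rfloor$ times. Hence there exists $j\in\{0,\ldots,j_0\}$ for which (A) applies to $x_j$, with $D_j:=[K(x_j):K]\le D/p^j$.

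In (\ref{eq:lambdaprelb}), applied to $x_j$, I would choose the $n_\sigma$ as follows. For the distinguished $\sigma_0$ of the Quill Hypothesis, take $q_{\sigma_0}=q$ and $p^{n_{\sigma_0}}$ a sufficiently small multiple of $D_j/q$, so that the bracket in (\ref{eq:lambdaprelb}) stays bounded below by a fixed fraction of the positive Quill gap $\log 4-q\log\cf(\sigma_0(f))$; this makes the $\sigma_0$-term a positive multiple of $1/D_j$. For each other embedding $\sigma$ take $n_\sigma=\log_p D+O(1)$ with a large enough additive constant: since $\pco^+_{n_\sigma}(\sigma(f))$ has at most $n_\sigma$ elements it lies in a hedgehog with at most $n_\sigma$ quills, so the bracket is bounded by a constant depending only on $f$, and the prefactor $p^{-(l+n_\sigma)}$ kills the corresponding (possibly negative) contribution down to $o(1/D)$. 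Summing yields $\lambda_f^{\mathrm{max}}(x_j)\ge c_0/D_j$ for some $c_0=c_0(f)>0$. Using $\lambda_f^{\mathrm{max}}(x_j)=p^{kj}\lambda_f^{\mathrm{max}}(x)$ and $D_j\le D/p^j$,
\[
\lambda_f^{\mathrm{max}}(x)\ge\frac{c_0}{p^{kj}D_j}\ge\frac{c_0}{p^{(k-1)j}D},
\]
and the worst case $j=j_0$ satisfies $p^{(k-1)j_0}\le D^{k-1}$, giving $\lambda_f^{\mathrm{max}}(x)\ge c_0/D^k$.

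\textbf{Part (ii) and main obstacle.} When $x$ is an algebraic integer, all non-Archimedean local heights $\lambda_{\sigma(f),v}(\sigma(x))$ vanish (since $\sigma(c)$ and $\sigma(x)$ are $v$-integral and the orbit remains so), hence $\hat h_f(x)\ge\lambda_f^{\mathrm{max}}(x)/([K:\IQ]D)$, and (i) yields the desired bound with an extra factor of $D$. For general $f$-wandering $x\in\overline K$, at each finite place $v$ where $|\sigma(x)|_v$ exceeds a threshold depending only on $c$ and $p$ one has $\lambda_{\sigma(f),v}(\sigma(x))=\log|\sigma(x)|_v$; pairing these non-Archimedean contributions, via the product formula, with the Archimedean estimate of (i) applied over a ring of $S$-integers where $x$ is integral should produce the $1/D^{k+1}$ bound. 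The most delicate point of part~(i) is the simultaneous quantitative choice of $(n_\sigma)_\sigma$, balancing the positive $\sigma_0$-contribution against the potentially negative contributions at embeddings where the Quill Hypothesis may fail and $\cf(\sigma(f))$ can be arbitrarily large; in part~(ii) the principal obstacle is the passage from algebraic integers to arbitrary elements of $\overline K$, which appears to require a non-Archimedean companion to the hedgehog–capacity construction of Section~\ref{sec:construction}.
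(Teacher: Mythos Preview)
Your argument for part~(i) is essentially the paper's: iterate Proposition~\ref{prop:prefinal}, the wandering hypothesis kills conclusion~(C), conclusion~(B) drops the degree by a factor of~$p$, and after at most $\lfloor\log_p D\rfloor$ steps you land in~(A). Your balancing of the $n_\sigma$ is the same in spirit; the paper takes $q_\sigma=\#\pco^+(\sigma(f))$ for $\sigma\ne\sigma_0$ (this set is finite since $f^{(k-1)}(0)=f^{(l-1)}(0)$ forces $0$ to be $f$-preperiodic), which is slightly cleaner than your $q_\sigma=n_\sigma$ but leads to the same estimate. Unrolling the recursion versus formal induction is a matter of taste; both yield $\lambda_f^{\mathrm{max}}(x)\ge c_0/D^k$.

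Your treatment of part~(ii) for algebraic integers is also the paper's. But you have misidentified the difficulty in the non-integral case. No $S$-integer version of Section~\ref{sec:construction}, no non-Archimedean hedgehogs, and no product-formula balancing are needed. If $x$ is \emph{not} an algebraic integer then there is a prime~$v$ and an embedding $\sigma\colon K(x)\hookrightarrow\IC_v$ with $|\sigma(x)|_v>1$. Since $c\in\cO_K$, the ultrametric inequality gives $|\sigma(f^{(n)}(x))|_v=|\sigma(x)|_v^{p^n}$ for all~$n$, hence
\[
\lambda_{\sigma(f),v}(\sigma(x))=\log|\sigma(x)|_v\ge \frac{\log v}{[\IQ(x):\IQ]}\ge \frac{\log 2}{[\IQ(x):\IQ]},
\]
the first inequality coming from the discreteness of the value group. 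All local heights in (\ref{def:csheight}) being nonnegative, this single term already yields $\hat h_f(x)\ge(\log 2)/[\IQ(x):\IQ]^2$, which is \emph{stronger} than the claimed $\kappa/[K(x):K]^{k+1}$ once $\kappa\le[K:\IQ]^{-2}\log 2$. So the non-integral case is in fact the trivial one; what you flagged as ``the principal obstacle'' dissolves in two lines.
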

\begin{proof}
  We remark that $c\in \cO_K$ by Lemma~\ref{lem:unramified}(i).
  
  Our proof of (i) is by induction on $[K(x):K]$. 
  We will apply Proposition~\ref{prop:prefinal}.  
  Observe that an $f$-wandering point cannot lead to conclusion (C).

  Let $\sigma_0\in\hom(K,\IC)$ and $q\ge 0$ be as in the Quill
  Hypothesis. We introduce two parameters, $\epsilon_0,\epsilon \in
  (0,1)$ both are sufficiently small and fixed in terms of $f$ and the
  other given data such as $q$ and $\sigma_0(f)$, but independent of $x$.
  We will fix $\epsilon$ in function of $\epsilon_0$.
  
  Let $n_{\sigma_0}\ge 1$ be the unique integer with
  $p^{n_{\sigma_0}} \ge [K(x):K]/\epsilon_0 > p^{n_{\sigma_0}-1}$.
  For all $\sigma\in\hom(K,\IC)$ with $\sigma\not=\sigma_0$
  we fix the unique $n_{\sigma}\in\IN$ with
  $p^{n_{\sigma}} \ge [K(x):K]/\epsilon > p^{n_{\sigma}-1}$.
  
  For any $\sigma$ we set $q_\sigma=\#\pco^+(\sigma(f)) < \infty$.
  Then 
  $\pco^+(\sigma(f))$  is contained
  in a hedgehog with at most  $q_\sigma$  quills and therefore so is
  $\pco^+_{n_{\sigma}}(\sigma(f)) \subset \pco^+(\sigma(f))$.

  For $\epsilon$ sufficiently
  small we have
  \begin{alignat*}1
    Y=\sum_{\sigma\in\hom(K,\IC)}
    &\frac{1}{p^{n_\sigma}}\left(\frac{\log 4}{q_\sigma +
        2[K(x):K]p^{-n_\sigma}} -\log \cf(\sigma(f))\right)
    \\
    &\ge
    \frac{1}{p^{n_{\sigma_0}}}\left(
      \frac{\log 4}{q +
        2[K(x):K]p^{-n_{\sigma_0}}} -{\log \cf(\sigma_0(f))}\right)
    - \sum_{\sigma\not=\sigma_0}
    \frac{\log \cf(\sigma(f))}{p^{n_\sigma}}
    \\
    &\ge
    \frac{1}{p^{n_{\sigma_0}}}\left(
      \frac{\log 4}{q +
        2\epsilon_0} -{\log \cf(\sigma_0(f))}\right)
    - \frac{\epsilon}{[K(x):K]}\sum_{\sigma\not=\sigma_0}
    \log \cf(\sigma(f)).
  \end{alignat*}
  For $\epsilon_0$ small enough and fixed in function of $f$, the
  Quill Hypothesis (\ref{eq:quillcondition})  
  implies
  $(\log 4)/(q+2\epsilon_0) - \log\cf(\sigma_0(f)) \ge
  \kappa_1$ where $\kappa_1=\kappa_1(f) > 0$ depends only on $f$. So 
  \begin{equation*}
    Y \ge \frac{\kappa_1}{p^{n_{\sigma_0}}}     - \frac{\epsilon}{[K(x):K]}\sum_{\sigma\not=\sigma_0}
    \log \cf(\sigma(f))
    \ge
    \frac{1}{ [K(x):K]}\left(\frac{\kappa_1 \epsilon_0}{p}     - {\epsilon}\sum_{\sigma\not=\sigma_0}
    \log \cf(\sigma(f))\right).
  \end{equation*}
  Now we fix $\epsilon$ small in terms of $\epsilon_0$ and $f$ to
  achieve
  $\kappa_1\epsilon_0/(2p) \ge \epsilon \sum_{\sigma\not=\sigma_0}
  \log\cf(\sigma(f))$. So
  \begin{equation*}
    Y\ge \frac{\kappa_2}{[K(x):K]}\quad\text{with}\quad
    \kappa_2 = \frac{\kappa_1\epsilon_0}{2p}. 
  \end{equation*}
  
  If we are in conclusion (A) of Proposition~\ref{prop:prefinal}, then
  $\lambda_f^{\mathrm{max}}(x) \ge Y [K:\IQ]^{-1}p^{-l} \ge \kappa_2p^{-l}/[K(x):\IQ]$.
  This is stronger than the claim as we may assume $\kappa\le \kappa_2 [K:\IQ]^{-1}p^{-l}$.
    
  If we are in conclusion (B), then we have $[K(f^{(k)}(x)):K] \le
  [K(x):K]/p<[K(x):K]$. In particular, this rules out $x\in K$, so
  the base case of the induction was handled by conclusion (A) above.
  Now $\lambda_f^{\mathrm{max}}(f^{(k)}(x)) = p^k
  \lambda_f^{\mathrm{max}}(x)$. Induction on $[K(x):K]$ yields
  $p^k \lambda_f^{\mathrm{max}}(x) \ge \kappa /[K(f^{(k)}(x)):K]^k \ge \kappa
  p^k/[K(x):K]^k$.
  This completes the proof of (i) as conclusion (C) is impossible in
  the wandering case.

  For (ii) we observe that $\hat h_f(x)$ is a normalized
  sum of local canonical heights as in
  (\ref{def:csheight}). Moreover, every local canonical height
  takes 
  non-negative values. In particular, $\hat h_f(x) \ge
  \lambda_{f}^{\mathrm{max}}(x)/[\IQ(x):\IQ]$. If $x$ is an algebraic
  integer, then part (i) implies the desired lower bound for $\hat
  h_f(x)$ after adjusting $\kappa$.
  So assume that $x$ is not an algebraic integer. Let $F$ be a number
  field containing $x$. There is a non-Archimedean  $v\in M_{\IQ}$, which we may
  identify with a prime number, and a
  field embedding $\sigma\in\hom( F, \IC_v)$ with
  $|\sigma(x)|_v>1$.
  By anticipating ramification we find
  $|\sigma(x)|_v \ge v^{1/[\IQ(x):\IQ]}$. 
  Thus $|\sigma(x^p+c)|_v=|\sigma(x)|_v^p>1$
  by the ultrametric triangle inequality and since $c\in\cO_K$. 
  Furthermore $|\sigma(f^{(n)}(x))|_v =
  |\sigma(x)|_v^{p^n}$
  for all $n\in\IN$.
  Therefore, $\lambda_{\sigma(f),v}(\sigma(x)) = \log^+|\sigma(x)|_v
  \ge (\log v)/[\IQ(x):\IQ]\ge (\log 2)/[\IQ(x):\IQ]$.
  Again we use that local canonical heights are non-negative and conclude
  $\hat h_f(x) \ge (\log 2)/[\IQ(x):\IQ]^2\ge (\log 2)/[K(x):\IQ]^2 =
  [K:\IQ]^{-2}(\log 2)/[K(x):K]^2$. The theorem follows
  as we may assume $\kappa \le [K:\IQ]^{-2}\log 2$. 
\end{proof}

We consider an example before moving on. Let $c=-1.543689\ldots$ be
the real root of $T^3+2T^2+2T+2$. Then $[K:\IQ]=3$ where $K=\IQ(c)$.
Moreover, $f^{(3)}(0)=f^{(4)}(0)$ for $f=T^2+c$ and
$\pco^+(f)\subset\IR$ is contained in a union of $2$ quills. Let $\wp$
be a prime ideal of $\cO_K$ containing $2$, then $\wp^3= 2\cO_K$
follows from a pari/gp computation. We claim that $a^4 \equiv a^8
\imod {2\cO_K}$ for all $a\in\cO_K$. Indeed, $a^4(a^4-1)\in 2\cO_K$ if
$a\in \wp$. If $a\not\in\wp$, then $a$ is a unit modulo $2\cO_K$. So
$a^4 -1 \in 2\cO_K$ as the unit group $(\cO_K/2\cO_K)^\times$ has
order $4$. So hypothesis (i) in Theorem~\ref{thm:preperiodic2} is
satisfied with $(k,l)=(4,5)$; note $\delta=1$. The Quill Hypothesis is
met since $\cf(f)<2$ by Lemma~\ref{lem:Cflessthan2}(ii).

\begin{thm}
  \label{thm:canonicalhgtlb}
  Let $p$ be a prime number, let $K$ be a number field,  and let
  $f=T^p+c\in K[T]$. Suppose $k$ and $l$ are as in
  Theorem~\ref{thm:preperiodic2} and suppose that $f$ satisfies the
  Quill Hypothesis. 
  If $x\in\overline K$ is an $f$-preperiodic point, then
  \begin{equation}
    \label{eq:degreelbpreper}
    [K(x):K] \ge p^{\mathrm{preper}(x)/k-1}
  \end{equation}
  and
  \begin{equation}
    \label{eq:degreelbper}
    [K(x):K] \ge [K(f^{(\mathrm{preper}(x))}(x)):K] \ge \frac{\mathrm{per}(x)}{l-k}.
  \end{equation}
\end{thm}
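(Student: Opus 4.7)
The plan is to iterate Proposition~\ref{prop:prefinal}, exploiting that every $f$-preperiodic point $z$ satisfies $\hat h_f(z)=0$ and hence $\lambda_f^{\mathrm{max}}(z)=0$ by non-negativity of local canonical heights; in particular this applies to $x$, which is an algebraic integer since $c\in\cO_K$ by Lemma~\ref{lem:unramified}(i). Choosing the parameters $n_\sigma$ as in the proof of Theorem~\ref{thm:preperiodic2} and using the Quill Hypothesis $q\log\cf(\sigma_0(f))<\log 4$, one can make the right-hand side of \eqref{eq:lambdaprelb} strictly positive uniformly for every $z\in\overline K$ with $[K(z):K]\le[K(x):K]$. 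For every such preperiodic $z$, conclusion (A) of Proposition~\ref{prop:prefinal} is therefore ruled out.

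For \eqref{eq:degreelbpreper}, I would set $x_0=x$ and, as long as Proposition~\ref{prop:prefinal} applied to $x_j$ yields conclusion (B), define $x_{j+1}=f^{(k)}(x_j)$. Each $x_j$ is preperiodic, so (A) is impossible, and (B) strictly reduces $[K(x_j):K]$ by a factor of $p$, forcing the process to stop at some smallest index $J$ where conclusion (C) holds. At that step $\mathrm{preper}(x_J)\le k$, and since $\mathrm{preper}(x_j)=\max\{0,\mathrm{preper}(x)-jk\}$, we deduce $\mathrm{preper}(x)\le(J+1)k$, whence $J\ge\mathrm{preper}(x)/k-1$. Iterating (B) then yields $[K(x):K]\ge p^J[K(x_J):K]\ge p^J\ge p^{\mathrm{preper}(x)/k-1}$.

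For \eqref{eq:degreelbper}, the inclusion $f^{(\mathrm{preper}(x))}(x)\in K(x)$ gives the first inequality. For the second, put $y=f^{(\mathrm{preper}(x))}(x)$, which is $f$-periodic with $\mathrm{per}(y)=\mathrm{per}(x)$, and run the analogous iteration from $y_0=y$ with $y_{j+1}=f^{(k)}(y_j)$ in case (B). All $y_j$ lie in the periodic orbit of $y$, so $y_j\in K(y)$ and $\mathrm{per}(y_j)=\mathrm{per}(x)$. The iteration terminates at some $J'$ with conclusion (C), delivering $\mathrm{per}(x)=\mathrm{per}(y_{J'})\le(l-k)[K(y_{J'}):K]\le(l-k)[K(y):K]$, which is the desired bound.

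The main technical obstacle is the uniform positivity of the right-hand side of \eqref{eq:lambdaprelb} at every step of the iteration. This is inherited from the estimate $Y\ge\kappa_2/[K(x):K]>0$ obtained in the proof of Theorem~\ref{thm:preperiodic2}, since $[K(x_j):K]$ (respectively $[K(y_j):K]$) only decreases along the iteration, so a single choice of $(n_\sigma)_\sigma$ depending on $[K(x):K]$ suffices throughout.
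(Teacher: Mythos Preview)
Your proposal is correct and follows the same approach as the paper: rule out conclusion (A) for preperiodic points via the Quill Hypothesis and then exploit the (B)/(C) dichotomy, with your explicit iteration being the paper's induction on $\mathrm{preper}(x)$ unwound. For \eqref{eq:degreelbper} the paper observes directly that a periodic $y$ satisfies $K(f^{(k)}(y))=K(y)$, so (B) is impossible and (C) applies at once---hence your loop from $y$ terminates immediately at $J'=0$, but the argument remains valid as written.
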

\begin{proof}
  By hypothesis, the $f$-orbit of $0$ is finite and so is $\pco^+(f)$. 
  By Lemma~\ref{lem:unramified}(i) we see that $c$ is an algebraic
  integer. So $x$, an $f$-preperiodic point, is also an algebraic
  integer. Moreover, $\lambda_f^{\mathrm{max}}(x)=0$.

  Let $\sigma_0\in \hom(K,\IC)$ be as in the Quill Hypothesis. We fix
  $n_{\sigma_0}$ and $n_{\sigma}$ for $\sigma_0\not=\sigma\in
  \hom(K,\IC)$ in terms of $x$ as in the proof of
  Theorem~\ref{thm:preperiodic2} to ensure that right-hand side of
  (\ref{eq:lambdaprelb}) is strictly positive. We can rule out
  conclusion (A) of Proposition~\ref{prop:prefinal}.
  
  Then we are  either in conclusion (B) where 
  $[K(f^{(k)}(x)):K]\le [K(x):K]/p$ or conclusion (C) where
  $\mathrm{preper}(x) \le k$ and $\mathrm{per}(x) \le (l-k)[K(x):K]$.

  Observe that we have $\mathrm{preper}(f(x)) =
  \max\{0,\mathrm{preper}(x)-1\}$. 
  Iterating gives $\mathrm{preper}(f^{(m)}(x)) =
  \max\{0,\mathrm{preper}(x)-m\}$ for all integers $m\ge 0$.

  We prove (\ref{eq:degreelbpreper}) by induction on $\mathrm{preper}(x)$.
  The claim is trivial if $\mathrm{preper}(x)\le k$. So let us assume
  $\mathrm{preper}(x)>k$. Then we are in conclusion (B) of
  Proposition~\ref{prop:prefinal}. Hence
  $[K(f^{(k)}(x)):K]\le [K(x):K]/p$ and
  $\mathrm{preper}(f^{(k)}(x)) = \mathrm{preper}(x)-k\ge 0$.
  By induction
  \begin{equation*}
    [K(x):K]\ge p [K(f^{(k)}(x)):K] \ge
    p^{\mathrm{preper}(f^{(k)}(x))/k}
    =p^{\mathrm{preper}(x)/k-1},
  \end{equation*}
  as desired.

  The first inequality in (\ref{eq:degreelbper}) is immediate. 
  We prove the second inequality in the case where $x$ is $f$-periodic
  first. 
  Indeed, then $K(f(x))=K(x)$ and so we are again in conclusion (C). 
  Hence $[K(x):K]\ge \mathrm{per}(x)/(l-k)$, as desired. 
  If $x$ is $f$-preperiodic then 
  (\ref{eq:degreelbper}) is applicable to the
  $f$-periodic 
$f^{(\mathrm{preper}(x))}(x)$. We find
  $[K(f^{(\mathrm{preper}(x))}(x)):K] \ge {\mathrm{per}(f^{(\mathrm{preper}(x))}(x))}/(l-k)$.  
  The theorem follows as $\mathrm{per}(f^{(m)}(x)) = \mathrm{per}(x)$
  for all integers $m\ge 0$.
\end{proof}

\begin{proof}[Proof of Theorem~\ref{thm:periodichgtlb}]
  We suppose $f^{(l-1)}(0)=0$ with $l\ge 2$.
  We want to apply Theorem~\ref{thm:preperiodic2} with $K=\IQ(c)$.
  Indeed, it suffices to verify that hypothesis (i) holds true for
  $k=1$ after possibly adjusting $l$.
  
  By Lemma~\ref{lem:unramified} the parameter $c$ is an algebraic integer and
  $K/\IQ$ is unramified above $p$.

  We write $m$ for the least common multiple of $l-1$ and
  all residue degrees of all 
  prime ideals of $\cO_K$ containing $p$.
  So 
  $a\equiv a^{p^{m}} \imod {p\cO_K}$ for all $a\in\cO_K$.
  Moreover  $f^{(l-1)}(0)=0$ implies $f^{(m)}(0)=0$.
  Theorem~\ref{thm:periodichgtlb} 
  follows from Theorem~\ref{thm:preperiodic2} applied to $(k,l)=(1,m+1)$; note $\delta=0$.
\end{proof}

\begin{proof}[Proof of Corollary~\ref{cor:irrfactorgrowth}]
  We know that $c$ is
  an algebraic integer by Lemma~\ref{lem:unramified}(i).

  Let $x\in\overline K$ with $f^{(n)}(x)=y$.
  
  As in the
  proof of Theorem~\ref{thm:periodichgtlb} we may assume that there
  exists $l\ge 2$ such that the pair $(1,l)$ satisfies the hypothesis of
  Theorem~\ref{thm:preperiodic2}.

  If $y$ is $f$-periodic and suppose $n\ge 0$ is minimal with $f^{(n)}(x)=y$. 
  To prove (i) we may assume  $n\ge m=\mathrm{per}(y)$.
  If $\mathrm{preper}(x) \le n-m$, then $f^{(n-m)}(x)$ is $f$-periodic
  and one among $f^{(n-m)}(x),\ldots,f^{(n-1)}(x)$ must equal $y$.
  But this contradicts the minimality of $n$
  and so $\mathrm{preper}(x) \ge n-m+1$.
  Theorem~\ref{thm:canonicalhgtlb} implies $[K(x):K]\ge
  p^{\mathrm{preper}(x)-1} \ge p^{n-m}$. We conclude (i). 

  The proof of  part (ii) is very similar. 
  We use 
  $1\le \mathrm{preper}(y)=\mathrm{preper}(f^{(n)}(x)) =
  \max\{0,\mathrm{preper}(x)-n\}$ to infer
  $\mathrm{preper}(x)=n+\mathrm{preper}(y)\ge n+1$.
  Theorem~\ref{thm:canonicalhgtlb} implies $[K(x):K]\ge
  p^{\mathrm{preper}(x)-1} \ge p^n$. But $[K(x):K]\le \deg
  (f^{(n)}-y)=p^n$, so part (ii) follows.

  For part (iii) let $x$ denote a root of $f^{(n)}-y$.
  We first suppose that $y$ is an algebraic integer. Then $x$ is also
  an algebraic integer.
  Then $\lambda_f^{\mathrm{max}}(x) \ge \kappa_1/[K(x):K]$ by
  Theorem~\ref{thm:periodichgtlb}(i). The functional equation of the local
  canonical height implies $\lambda_f^{\mathrm{max}}(x) =
  \lambda_f^{\mathrm{max}}(y)/p^n$. We combine and rearrange to find
  $[K(x):K]\lambda_f^{\mathrm{max}}(y) \ge \kappa_1p^n$.
  So $\lambda_f^{\mathrm{max}}(y)>0$ and $[K(x):K]\ge
  \kappa p^n$ with $\kappa = \kappa_1/\lambda_f^{\mathrm{max}}(y)$.

  Second, suppose that $y$ is not an algebraic integer. Then there
  exists a prime number $v$ and a field embedding $\sigma\in\hom(
  K,\IC_v)$ with $|\sigma(y)|_v>1$. 
  The ultrametric triangle inequality
  implies $|\sigma(x)|_v>1$ and $|\sigma(y)|_v=|\sigma(f^{(n)}(x))|_v =
  |\sigma(x)|_v^{p^n}$. So the ramification index of $K(x)/K$ at some
  prime ideal above $v$ grows like a positive multiple of $p^n$.
  In particular, $[K(x):K]\ge \kappa p^n$ for all $n\in\IN$ where
  $\kappa>0$ is independent of $n$. 
\end{proof}

\begin{proof}[Proof of Corollary~\ref{cor:wanderingminus1}]
  We will use Proposition~\ref{prop:prefinal} with $p=2$ and $K=\IQ$.
  
  Observe that
  $0=f^{(2)}(0)$. As $a\equiv a^4 \imod 2$ for all $a\in\IZ$ we see
  that  $(k,l)=(1,3)$ satisfies
  hypothesis  (i) of
  Proposition~\ref{prop:prefinal}; here $\delta=0$. 
  
  The single quill $[-1,0]$ suffices to cover
  $\pco^+(f)=\{-1,0\}$. Moreover, the value $r$ from
  Lemma~\ref{lem:localhgtlb1} applied to $d=2$ and $c=-1$
  is the golden ratio $(1+\sqrt 5)/2$.
  So $\cf(T^2-1)\le (1+\sqrt 5)/2$ by
  Lemma~\ref{lem:iteratefabsbound}.  
  
  The proof of part (i) is by
  induction on $[\IQ(x):\IQ]$. We fix $n\in\IN$ to be minimal with
  $2^n \ge 3[\IQ(x):\IQ]$, so $2^{n-1}< 3[\IQ(x):\IQ]$.
  
  Observe that the $f$-wandering point $x$ is in conclusion (A) or
  (B) of Proposition~\ref{prop:prefinal}.
  If $x\in\IQ$ then we must be in conclusion (A).

  Conclusion (A) implies
  \begin{alignat*}1
    \lambda_f^{\mathrm{max}}(x) &\ge \frac{1}{2^{3+n}} \left(\frac{\log
        4}{1+2[\IQ(x):\IQ]2^{-n}} - \log \frac{\sqrt 5+1}{2}\right)
    \ge \frac{1}{2^{3+n}}\log\left(\frac{4^{3/5}\cdot 2}{\sqrt
        5+1}\right)
    \\
    &\ge \frac{1}{48}\log\left(\frac{4^{3/5}\cdot 2}{\sqrt 5+1}\right)\frac{1}{[\IQ(x):\IQ]}.
  \end{alignat*}
  Part (i) follows in this case and in particular if $x\in\IQ$; which
  is the  base case.

  In conclusion (B) we have $[\IQ(f(x)):\IQ]\le [\IQ(x):\IQ]/2$.
  Part (i) follows by induction on $[\IQ(x):\IQ]$ and since
  $\lambda_f^{\mathrm{max}}(f(x))=2\lambda_f^{\mathrm{max}}(x)$.

  The estimate in part (ii)  follows from
  part (i) in the integral case. In the non-integral case we
  use the same argument 
  as in the proof
  of Theorem~\ref{thm:preperiodic2}(ii) and conclude using 
   $\log 2 > \log(4^{11/10}/(\sqrt 5+1))/48$. 
\end{proof}

\begin{proof}[Proof of Corollary~\ref{cor:preperminus1}]
  As in the proof of Corollary~\ref{cor:wanderingminus1} we set
  $K=\IQ$ and 
  $(k,l)=(1,3)$. The single quill $[-1,0]$ covers $\pco^{+}(T^2-1)$ and
  we have $\cf(T^2-1)\le  (1+\sqrt{5})/2$.
  The numerical condition in the Quill Hypothesis
  follows as $(1+\sqrt 5)/2<4$.
  The corollary follows from Theorem~\ref{thm:canonicalhgtlb}. 
\end{proof}




\bibliographystyle{alpha}
\bibliography{literature}

\end{document}